\newtheorem{theorem}{Theorem}[section]
\newtheorem{lemma}[theorem]{Lemma}
\newtheorem{prop}[theorem]{Proposition}
\newtheorem{corollary}[theorem]{Corollary}
\theoremstyle{definition}
\newtheorem{defn}[theorem]{Definition}
\newtheorem{remark}[theorem]{Remark}
\numberwithin{equation}{section}
\def\p{\text{p}}
\def\End{\text{End}}
\def\GL{{\text{GL}}}
\def\SP{\text{SP}}
\def\SO{\text{SO}}
\def\O{\text{O}}
\def\Lie{\text{Lie}}
\newcommand{\Rmnum}[1]{\expandafter\@slowromancap\romannumeral #1@}
\begin{document}

\title[Vust's Theorem and higher level Schur-Weyl duality]{Vust's Theorem and higher level Schur-Weyl duality for types $B$, $C$ and $D$}

\author{Li Luo and Husileng Xiao}

\address{Department of Mathematics, East China Normal University, Shanghai 200241, China.}\email{lluo@math.ecnu.edu.cn, hsl1523@163.com.}

\begin{abstract}
Let $G$ be a complex linear algebraic group, $\mathfrak{g}=\Lie(G)$ its Lie algebra and $e\in\mathfrak{g}$ a nilpotent element. Vust's theorem says that in case of $G=\GL(V)$, the algebra $\mbox{End}_{G_e}(V^{\otimes d})$, where $G_e\subset G$ is the stabilizer of $e$ under the adjoint action, is generated by the image of the natural action of $d$-th symmetric group $\mathfrak{S}_d$ and the linear maps $\{1^{\otimes (i-1)}\otimes e\otimes1^{\otimes (d-i)}|i=1,\ldots,d\}$. In this paper, we generalize this theorem to $G=\O(V)$ and $\SP(V)$ for nilpotent element $e$ with $\overline{G\cdot e}$ being normal. As an application, we study the higher Schur-Weyl duality in the sense of \cite{BK2} for types $B$, $C$ and $D$, which establishes a relationship between $W$-algebras and degenerate affine braid algebras.
\end{abstract}

\maketitle
\section{Introduction}
The classical Schur-Weyl duality, named after two pioneers of representation theory, shows a double centralizer property between the general linear group $\GL(V)$ and the symmetric group $\mathfrak{S}_d$.
Precisely, the $d$-fold tensor space $V^{\otimes d}$ admits a $(\mathbb{C}\GL(V),\mathbb{C}\mathfrak{S}_d)$-bimodule structure, where $\mathfrak{S}_d$ acts by permuting the tensor positions and $\GL(V)$ acts naturally in each tensor position. If we name the representations as follows
$$\mathbb{C}\GL(V)\stackrel{\varphi}{\curvearrowright} V^{\otimes d}\stackrel{\sigma}{\curvearrowleft} \mathbb{C}\mathfrak{S}_d$$
then
\begin{eqnarray*}
\End_{\GL(V)}(V^{\otimes d})&=& \sigma(\mathbb{C}\mathfrak{S}_d);\\
\varphi(\mathbb{C}\GL(V))&=&\End_{\mathfrak{S}_d}(V^{\otimes d}).
\end{eqnarray*}
Differentiating the action of $\GL(V)$, we obtain an action (denoted by $\phi$) of its Lie algebra $\mathfrak{gl}(V)$ on $V^{\otimes d}$. The following is an alternative statement of Schur-Weyl duality:
 \begin{eqnarray*}
\End_{\mathfrak{gl}(V)}(V^{\otimes d})&=& \sigma(\mathbb{C}\mathfrak{S}_d);\\
\phi(\textbf{U}(\mathfrak{gl}(V)))&=&\End_{\mathfrak{S}_d}(V^{\otimes d}).
\end{eqnarray*}

Nowadays there are varieties of generalizations for this duality. Its quantum analogue was studied by Jimbo \cite{Ji} where symmetric groups and universal enveloping algebras are replaced by Iwahori-Hecke algebras and quantum groups, respectively. The super version was achieved by Sergeev \cite{S}, who established a double centralized property between the Lie superalgebra $\mathfrak{gl}_{m|n}$ and $\mathfrak{S}_d$.

For other classical algebraic groups $G=\O(V)$ or $\SP(V)$,  Brauer \cite{B} introduced a series of algebras (now named Brauer algebras) and showed that $G$ and Brauer algebras form an analogue of Schur-Weyl duality for types $B, C$ and $D$.

Moreover, Vust considered another interesting generalization of Schur-Weyl duality.  Let $G=\GL(V)$, $\mathfrak{g}=\mathfrak{gl}(V)$ its Lie algebra and $e\in\mathfrak{g}$ a nilpotent element. Denote the centralizer of $e$ in $G$ by
$$G_e:=\{g\in G|g^{-1}eg=e\}.$$
For any $1\leq i\leq d$, write
\begin{equation}\label{def:ei}
e^{(i)}:=1^{\otimes (i-1)}\otimes e\otimes1^{\otimes (d-i)}\in\End(V^{\otimes d}).
\end{equation}

Denote by $\mathfrak{S}_d[e]$ the subalgebra of $\End(V^{\otimes d})$ generated by $\sigma(\mathfrak{S}_d)\cup\{e^{(i)}|1\leq i\leq d\}$. Vust's Theorem (c.f. \cite{KP1}) says that
\begin{equation}\label{Vust:typeA}
\End_{G_{e}}(V^{\otimes d})=\mathfrak{S}_{d}[e].
\end{equation}
Its arbitrary characteristic version was proved by Donkin in \cite{D}.

Denoted by
$$\mathfrak{g}_e:=\mbox{Lie}(G_e)=\{X\in\mathfrak{g}|[X,e]=0\}.$$
Based on Vust's Theorem, Brundan and Kleshchev \cite{BK2} established a duality between $\mathfrak{g}_e$ and $\mathfrak{S}_{d}[e]$.
Then they developed its filtered deformation, which is called \emph{higher level Schur-Weyl duality}. This duality shows a double centralizer property between the $W$-algebras of type $A$ and the cyclotomic Hecke algebras.

In this paper, we will investigate the Vust's theorem for types $B$, $C$ and $D$, and then study the higher level Schur-Weyl duality for these types. The main results of this present paper are Theorem \ref{mainresult} and \ref{higr shur w dual}. Throughout this paper, the base field is the complex number field $\mathbb{C}$ (any algebraically closed field of characteristic zero is fine, too).

We would like to point out here that there is also another kind of Schur-Weyl duality different from Brauer's setting. Note that the symmetric group $\mathfrak{S}_d$ is the Weyl group of type $A$. It is natural to consider the duality when $\mathfrak{S}_d$ is replaced by Weyl groups of other types.
We refer to Green's work \cite{Gre} about this issue. Furthermore, its quantum analogue, developed by Bao and Wang \cite{BW}, can be used to give a new approach to Kazhdan-Lusztig Theory. Chen, Guay and Ma's work \cite{CGM} about the duality between Yangians and affine Hecke algebras is also with this taste.
We will study the higher level Schur-Weyl duality for this different setting in a subsequent paper, which may provide a relationship between W-algebras and Yangians for type $B/C$.

The paper is organized as follows. Section 2 is devoted to generalizing Vust's Theorem. In Section 3 we study the higher level Schur-Weyl duality for types $B,C$ and $D$.

\section{Vust's Theorem for types $B,C$ and $D$}\label{proof of main result }

This section is mainly devoted obtaining Vust's Theorem for types $B, C$ and $D$ (i.e. Theorem \ref{mainresult}).

\subsection{Trace function}
Let $G=\O(V)$ or $\SP(V)$, and $\langle,\rangle$ be the defining quadratic form on $V$ for $G$. For each $X\in\End(V)$, denote by $X^{\iota}\in\End(V)$ the unique element satisfying
$\langle Xv,u\rangle=\langle v,X^{\iota}u\rangle$ for any $u,v\in V$. In particular, $(X^\iota)^\iota=X$. Furthermore,
\begin{equation}\label{iotalie}
\mbox{$X\in\mathfrak{g}=Lie(G)$ iff $X^{\iota}=-X$.}
\end{equation}

There is a bijection $\theta: V^{\otimes 2}\rightarrow\End(V)$ determined by
\begin{equation}\label{isotheta}
\theta(u\otimes w)(v):=\langle w,v\rangle u,\quad (\forall u,w,v\in V).
\end{equation}
It is clear that $$\mbox{Trace}(\theta(u\otimes w))=\langle w,u\rangle$$ and hence
\begin{equation}\label{iotatrace}
\mbox{Trace}(\theta(Xu\otimes w))=-\mbox{Trace}(\theta(u\otimes X^\iota w)).
\end{equation}

\begin{lemma}\label{Tr of product}
(1). If $X=\theta(u\otimes w)\in \End(V)$, then $X^{\iota}=\theta(w\otimes u).$\\
(2). Let $X_i=\theta(u_i\otimes w_i)$ where $u_i,w_i\in V$ for $i=1,2,\ldots, k$. Then
$$X_1 X_2 \cdots X_{k}=\langle w_{1},u_{2}\rangle\langle w_{2},u_{3}\rangle \cdots \langle w_{k-1},u_{k}\rangle \theta(u_{1}\otimes w_{k}),$$ and hence
$$\mbox{Trace}(X_1 X_2 \cdots X_k)=\langle w_{1},u_{2}\rangle\langle w_{2},u_{3}\rangle \cdots \langle w_{k},u_{1}\rangle.$$
\end{lemma}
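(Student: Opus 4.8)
The plan is to compute everything directly from the defining identity \eqref{isotheta}, since all the objects involved are built from rank-one operators $\theta(u\otimes w)$ and the bilinear form. The two halves of the lemma are logically almost independent, so I would treat them separately.

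For part (1), I would verify $\theta(u\otimes w)^\iota=\theta(w\otimes u)$ by testing against arbitrary vectors. Writing $X=\theta(u\otimes w)$, I compute $\langle Xv_1,v_2\rangle=\langle\langle w,v_1\rangle u,v_2\rangle=\langle w,v_1\rangle\langle u,v_2\rangle$ and, on the other side, $\langle v_1,\theta(w\otimes u)v_2\rangle=\langle v_1,\langle u,v_2\rangle w\rangle=\langle u,v_2\rangle\langle v_1,w\rangle$. These agree because $\langle,\rangle$ is symmetric (in the $\O(V)$ case) or because one matches $\langle w,v_1\rangle\langle u,v_2\rangle$ against $\langle v_1,w\rangle\langle u,v_2\rangle$ using symmetry/skew-symmetry consistently; in the symplectic case the two sign changes coming from the two pairings cancel, so the identity $X^\iota=\theta(w\otimes u)$ holds uniformly. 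By uniqueness of the adjoint this forces $X^\iota=\theta(w\otimes u)$.

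For part (2), the key computation is the composition of two rank-one maps: for $v\in V$,
\[
X_1X_2(v)=X_1\bigl(\langle w_2,v\rangle u_2\bigr)=\langle w_2,v\rangle\,\langle w_1,u_2\rangle\,u_1=\langle w_1,u_2\rangle\,\theta(u_1\otimes w_2)(v),
\]
so $X_1X_2=\langle w_1,u_2\rangle\,\theta(u_1\otimes w_2)$. I would then induct on $k$: assuming $X_1\cdots X_{k-1}=\langle w_1,u_2\rangle\cdots\langle w_{k-2},u_{k-1}\rangle\,\theta(u_1\otimes w_{k-1})$, multiplying on the right by $X_k=\theta(u_k\otimes w_k)$ and applying the two-factor formula with $(u_1,w_{k-1})$ in place of $(u_1,w_1)$ gives the claimed product formula for $X_1\cdots X_k$. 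Finally, taking the trace and using $\mbox{Trace}(\theta(u_1\otimes w_k))=\langle w_k,u_1\rangle$ (which follows from \eqref{isotheta} by evaluating on a basis, as already noted before the lemma) yields the cyclic-looking expression $\langle w_1,u_2\rangle\langle w_2,u_3\rangle\cdots\langle w_{k-1},u_k\rangle\langle w_k,u_1\rangle$.

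I do not anticipate a genuine obstacle here — this is a direct unwinding of definitions — but the one place to be careful is the bookkeeping of signs in the symplectic case in part (1), making sure that the asymmetry of $\langle,\rangle$ is handled consistently when swapping the two arguments, and likewise checking that the trace formula $\mbox{Trace}(\theta(u\otimes w))=\langle w,u\rangle$ is used with the arguments in the correct order throughout the induction in part (2).
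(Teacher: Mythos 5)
Your part (2) is correct and is essentially the paper's own argument: the paper telescopes $X_1\cdots X_k(v)=\langle w_k,v\rangle X_1\cdots X_{k-1}(u_k)=\cdots$ from the right, which is the same computation as your two-factor formula plus induction, and the trace formula then follows from $\mbox{Trace}(\theta(u\otimes w))=\langle w,u\rangle$ exactly as you say.

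The gap is in part (1), at precisely the point you flagged. Comparing
\[
\langle \theta(u\otimes w)v_1,v_2\rangle=\langle w,v_1\rangle\langle u,v_2\rangle
\qquad\mbox{with}\qquad
\langle v_1,\theta(w\otimes u)v_2\rangle=\langle u,v_2\rangle\,\langle v_1,w\rangle,
\]
the factor $\langle u,v_2\rangle$ occurs with its arguments in the same order on both sides; only the pairing of $w$ with $v_1$ gets its arguments transposed. So in the symplectic case there is exactly \emph{one} sign change, not two, and nothing cancels: for $\SP(V)$ one actually gets $\theta(u\otimes w)^\iota=-\theta(w\otimes u)$. Concretely, in $\mathbb{C}^2$ with $\langle e_1,e_2\rangle=1=-\langle e_2,e_1\rangle$, the operator $X=\theta(e_1\otimes e_2)=-E_{11}$ satisfies $X^\iota=-E_{22}=-\theta(e_2\otimes e_1)$. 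Your claimed cancellation is therefore false. (For what it is worth, the paper's own one-line verification silently uses $\langle v_1,w\rangle=\langle w,v_1\rangle$ and so also only establishes the orthogonal case; the statement valid for both groups is $\theta(u\otimes w)^\iota=\epsilon\,\theta(w\otimes u)$ with $\epsilon=1$ for $\O(V)$ and $\epsilon=-1$ for $\SP(V)$.) Part (2) does not involve $\iota$ and is unaffected, but wherever part (1) feeds into later sign bookkeeping, e.g. \eqref{iotatrace} and Lemma \ref{change X to X'}, the extra $\epsilon$ has to be carried along in the symplectic case.
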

\begin{proof}
The first statement follows from the following computation:
$$\langle\theta(u\otimes w)(v_1),v_2\rangle=\langle w,v_1\rangle\langle u,v_2\rangle=\langle v_1, \theta(w,u) v_2\rangle, \quad \forall v_1,v_2\in V.$$

For the second statement, we can show that for any $v\in V$,
\begin{eqnarray*}
X_1 X_2 \cdots X_{k}(v)&=&\langle w_k,v\rangle X_1 X_2\cdots X_{k-1}(u_k)\\
&=&\langle w_k,v\rangle\langle w_{k-1},u_k\rangle X_1 X_2\cdots X_{k-2}(u_{k-1})\\
&=&\cdots\cdots\\
&=&\langle w_k,v\rangle\langle w_{k-1},u_k\rangle\cdots\langle w_1, u_2\rangle(u_1)\\
&=&\langle w_{1},u_{2}\rangle\langle w_{2},u_{3}\rangle \cdots \langle w_{k-1},u_{k}\rangle \theta(u_{1}\otimes w_{k})(v).
\end{eqnarray*}
\end{proof}

\subsection{$G$-invariant ring}
Let $\mathbb{C}[\End(V)^{\oplus d}]$ be the polynomial function ring of $\End(V)^{\oplus d}$. The conjugation action of $G$ on $\End(V)$ induces an action of $G$ on $\mathbb{C}[\End(V)^{\oplus d}]$.
Write \begin{align*}
&\mathbb{C}[\End(V)^{\oplus d }]^{G}:=\\
&\left\{f\in \mathbb{C}[\End(V)^{\oplus d }]\middle|\begin{array}{c}
f(X_1,X_2,\ldots,X_d)=\\f(g^{-1}X_1g,g^{-1}X_2g,\ldots,g^{-1}X_d g),
\end{array} \begin{array}{l}
\forall g\in G \mbox{ and }\\ X_1,X_2,\ldots, X_d\in \End(V)
\end{array} \right\}
\end{align*}
to be the invariant ring for the action of $G$ on $\mathbb{C}[\End(V)^{\oplus d}]$.

\begin{theorem}[c.f. Theorem 7.1 in \cite{Pro}]\label{mith}
For $G=\O(V)$ or $\SP(V)$, the invariant ring
$\mathbb{C}[\End(V)^{\oplus d }]^{G}$ is generated by functions $f$ in form of
$$f(X_1,X_2,\ldots,X_d)=\mbox{Trace}(U_{i_{1}}\cdots U_{i_{k}}),$$
where $U_{j}=X_{j}$ or $X_{j}^{\iota}$, $k\in \mathbb{N}$ and $1\leq i_1,\ldots,i_k\leq d$.
\end{theorem}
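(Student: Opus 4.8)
The plan is to derive the statement from the classical First Fundamental Theorem (FFT) of invariant theory for $\O(V)$ and $\SP(V)$ acting on tensor powers of $V$, the bridge being the $G$-equivariant identification $\theta$ of \eqref{isotheta} between $\End(V)$ and $V^{\otimes 2}$ (note $\theta$ is equivariant precisely because $g^{\iota}=g^{-1}$ for $g\in G$), together with the bookkeeping already carried out in Lemma \ref{Tr of product}. We follow the strategy of Procesi \cite{Pro}.

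First I would reduce to multilinear invariants. Since $\mathbb{C}[\End(V)^{\oplus d}]$ is graded by degree in the $d$ matrix variables, it suffices to treat a $G$-invariant $f$ that is multihomogeneous of some degree $(a_1,\dots,a_d)$. By polarization, $f$ arises by restitution from its full polarization $\tilde f$, a multilinear $G$-invariant function on $\End(V)^{\oplus N}$ with $N=a_1+\cdots+a_d$ (here we use $\char\,\mathbb{C}=0$); conversely, a presentation of $\tilde f$ of the desired shape restitutes to one for $f$, because a product of traces of words in the $N$ variables specializes, upon identifying variables in blocks, to a product of traces of words in $X_1,\dots,X_d$. So the problem becomes: describe the space of multilinear $G$-invariant functions on $\End(V)^{\oplus N}$, i.e. $\big((\End(V)^{\otimes N})^{*}\big)^{G}$.

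Next I would translate this space into $G$-invariant tensors in $V$. Via $\theta$ we have $\End(V)\cong V^{\otimes 2}$ as $G$-modules, and since $\langle,\rangle$ is nondegenerate, $V^{\otimes 2N}$ is self-dual as a $G$-module; hence $\big((\End(V)^{\otimes N})^{*}\big)^{G}\cong (V^{\otimes 2N})^{G}$, where the two tensor legs attached to the $j$-th matrix slot are the "source" and "target" legs of $\theta$. Now Weyl's FFT for $\O(V)$ and $\SP(V)$ says that $(V^{\otimes 2N})^{G}$ is spanned by the contraction tensors $c_{\pi}$, one for each perfect matching $\pi$ of the $2N$ tensor factors, where $c_{\pi}$ pairs the matched factors using the form. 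Then I would read off the traces: given $\pi$, add the $N$ "internal" edges joining the two legs of each matrix slot; the resulting graph on the $2N$ legs is a disjoint union of cycles. Walking around one cycle and contracting successively through $\langle,\rangle$, Lemma \ref{Tr of product}(2) shows the cycle contributes a single factor $\mathrm{Trace}(U_{i_1}\cdots U_{i_k})$, while Lemma \ref{Tr of product}(1) shows that the $j$-th matrix slot contributes $U=X_j$ or $U=X_j^{\iota}$ according to the direction in which the cycle traverses it. Thus each $c_{\pi}$, as a multilinear invariant of $X_1,\dots,X_N$, is a product of traces of words $U_{i_1}\cdots U_{i_k}$ with $U_\ell\in\{X_{i_\ell},X_{i_\ell}^{\iota}\}$; restituting yields the same shape in $X_1,\dots,X_d$, and since products of such traces lie in the algebra they generate, the theorem follows.

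The only nonroutine point — hence the main obstacle — is the combinatorial translation in the last step: identifying the cycle structure of $\pi$ together with the internal edges, and tracking orientations so that the $\iota$'s land in the correct slots (and, in the symplectic case, so that the sign conventions implicit in $\theta$ and in the inverse form used for contractions are handled correctly). One should also observe that no Pfaffian-type invariants appear precisely because we work with the full orthogonal group $\O(V)$, not $\SO(V)$; for $\SO(V)$ the FFT would force an additional generator. The remaining ingredients — polarization and restitution, the $G$-module isomorphisms above, and the linear-algebra identity — are standard, the last being exactly Lemma \ref{Tr of product}.
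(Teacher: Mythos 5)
The paper offers no proof of this statement --- it is quoted directly from Procesi \cite{Pro} --- so there is nothing internal to compare against; your outline is a correct reconstruction of the standard argument, which is essentially Procesi's own. Polarization and restitution, the $G$-equivariant identification $\End(V)\cong V^{\otimes 2}$ (valid because $g^{\iota}=g^{-1}$ for $g\in G$), self-duality of $V$, Weyl's first fundamental theorem for $\O(V)$ and $\SP(V)$ in tensor form, and the cycle/orientation bookkeeping via Lemma \ref{Tr of product} are exactly the right ingredients; the two delicate points you flag (the orientations that place the $\iota$'s, and the symplectic sign conventions) do need care in a full write-up but only affect nonzero scalars, not the shape of the generating traces.
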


\subsection{Action of Brauer algebra on $V^{\otimes d}$}
The original definition of Brauer algebras involves $d$-diagrams with $2d$ vertices and $d$ edges. Since it would occupy too much space but will never be used in this paper, we refer to \cite{B} (also c.f. \cite{Gro}) for this definition. Instead, we describe the image of Brauer algebra in $\End(V^{\otimes d})$ in the following.

Take a basis $\{ v_{p} \mid 1 \leq p \leq n\}$ of $V$, and let
$\{ v^{p} \mid 1 \leq p \leq n\}$ be the dual basis (i.e. $\langle v_{p},v^{q}\rangle=\delta_{ij}$).
Define $\gamma_{ij}\in \mbox{End}(V^{\otimes d}) (i\neq j)$ by
$$\gamma_{ij}(u)=\langle u_{i},u_{j}\rangle\sum^{n}_{p=1}u_{1}\otimes \dots \otimes v_{p}\otimes \dots \otimes v^{p}\otimes \dots \otimes u_{d}$$
for any $u=u_{1}\otimes \dots \otimes u_{d} \in V^{\otimes d}$. It is known that $\gamma_{ij}$ is independent on the choice of $\{v_{p} \mid 1 \leq p \leq n\}$.

Let $B_d$ be the subalgebra of $\End(V^{\otimes d})$ generated by $\{\gamma_{ij}| 1\leq i\neq j\leq n\}$ and $\sigma(\mathfrak{S}_{d})$. It is known (c.f. Proposition 10.1.3 in \cite{GW}) that $B_d$ is the image of Brauer algebra in $\End(V^{\otimes d})$.

\subsection{Some technical lemmas}
For any $\mathbf{l}=(l_{1},\dots l_{d})\in\mathbb{Z}_{\geq 0}^{d}$ and $X\in\mathfrak{g}$, set
$$X(\mathbf{l}):=X^{l_{1}}\otimes\cdots\otimes X^{l_{d}}\in \End(V^{\otimes d}).$$

\begin{lemma}\label{change X to X'} Take $Y=\theta(u_{1}\otimes w_{1}) \otimes \cdots \otimes \theta(u_{d}\otimes w_{d})\in\End(V^{\otimes d})$ where $u_i,w_i \in V, (i=1,2,3,\ldots,d)$. For any $b \in B_{d}$, $\mathbf{l}=(l_{1},\dots l_{d})\in\mathbb{Z}_{\geq 0}^{d}$ and $X\in\mathfrak{g}$, we have
$$\mbox{Trace}(X(\mathbf{l})\circ b \circ Y)= (-1)^{\sum_{i=1}^d l_{i}} \mbox{Trace}( b \circ Y')$$
where
$$Y'=\theta(u_{1}\otimes X^{l_1}w_{1})\otimes\cdots\otimes\theta(u_{d}\otimes X^{l_d}w_{d}).$$
\end{lemma}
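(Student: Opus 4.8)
The plan is to reduce everything to the scalar identity $\mathrm{Trace}(X\theta(u\otimes w)) = -\mathrm{Trace}(\theta(u\otimes X^\iota w))$ from \eqref{iotatrace}, combined with the fact $X^\iota = -X$ for $X\in\mathfrak g$ from \eqref{iotalie}, handled one tensor slot at a time. The key observation is that the linear operator $b\in B_d$ and the diagonal operator $X(\mathbf l)$ almost commute in a controlled way: $X(\mathbf l)$ acts slot-by-slot, and conjugating $X^{l_i}$ past $b$ (which is a product of permutations $\sigma(\mathfrak S_d)$ and contractions $\gamma_{ij}$) can be tracked explicitly. So first I would establish two auxiliary commutation relations inside $\End(V^{\otimes d})$: for a transposition $s_{ij}\in\mathfrak S_d$ one has $X(\mathbf l)\circ \sigma(s_{ij}) = \sigma(s_{ij})\circ X(\mathbf l')$ where $\mathbf l'$ swaps the $i$-th and $j$-th entries of $\mathbf l$; and for the contraction operator, $X^{l_i}$ acting in slot $i$ (resp.\ slot $j$) before applying $\gamma_{ij}$ can be moved to the ``cap/cup'' and rewritten using the trace pairing.

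Since every $b\in B_d$ is a composition of such generators, it suffices by induction on the word length of $b$ to prove the identity when $b$ is either a single transposition or a single $\gamma_{ij}$ — but in fact, because the final answer is a single trace, the cleanest route is to not decompose $b$ at all, and instead argue directly on tensor-slot-by-tensor-slot. Concretely, I would peel off one factor at a time: write $X(\mathbf l) = X(\mathbf l - \mathbf e_1)\circ (X\otimes 1^{\otimes(d-1)})$ (when $l_1\geq 1$), and show
\[
\mathrm{Trace}\big((X\otimes 1^{\otimes(d-1)})\circ b\circ Y\big) = -\,\mathrm{Trace}\big(b\circ \tilde Y\big),
\]
where $\tilde Y$ replaces $\theta(u_1\otimes w_1)$ by $\theta(u_1\otimes X^\iota w_1) = -\theta(u_1\otimes Xw_1)$, yielding the factor $(-1)$. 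The point is that $(X\otimes 1^{\otimes(d-1)})\circ b$, for $b$ a Brauer diagram operator, is still expressible as a sum of operators of the form (some Brauer operator)$\circ$(plug $X$ into the appropriate slot of $Y$); one then uses cyclicity of the trace together with Lemma \ref{Tr of product}(1) and \eqref{iotatrace}. Iterating $\sum_i l_i$ times, and repeatedly converting each $X^\iota w_i$ into $-Xw_i$ via \eqref{iotalie}, produces the claimed sign $(-1)^{\sum l_i}$ and the operator $Y'$.

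The main obstacle is bookkeeping the interaction between $X$ in a given tensor slot and the ``horizontal'' strands of a Brauer diagram — i.e.\ when the slot into which $X$ is inserted is connected by $b$ to a cap or a cup rather than to a through-strand. In that situation one cannot simply slide $X$ through; instead one uses \eqref{iotatrace} to transfer $X$ across the pairing $\langle\,,\,\rangle$, which is exactly why $X^\iota$ (hence the sign, via \eqref{iotalie}) appears. I would isolate this as the crucial case-check: verify that for each generator type of $b$ and each slot $i$, moving $X^{l_i}$ off slot $i$ of $X(\mathbf l)$ and onto the corresponding $w_i$ inside $Y$ costs exactly the factor $(-1)^{l_i}$, with the residual operator again lying in the span of $B_d\circ(\text{diagonal }\theta\text{'s})$; the general statement then follows by composing these moves and invoking linearity and trace-cyclicity. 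Care is also needed that the rewriting is independent of the order in which the slots are processed, which follows because distinct slots of $X(\mathbf l)$ commute.
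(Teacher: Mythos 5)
Your plan assembles the right ingredients (the scalar identity \eqref{iotatrace} applied one tensor slot at a time, together with a check on the generators of $B_d$), but it is organized around an intermediate claim that is false as stated, and the cap/cup ``main obstacle'' you flag is a detour the paper's argument never takes. The claim that $(X\otimes 1^{\otimes(d-1)})\circ b$ can be rewritten as a sum of operators of the form $(\text{Brauer operator})\circ(\text{$X$ plugged into a slot of }Y)$ fails whenever slot $1$ is an output leg of a cup of $b$: already for $d=2$ one has $(X\otimes 1)\circ\gamma_{12}=-(1\otimes X)\circ\gamma_{12}$, i.e.\ $X$ applied after a cup can only be transferred to the \emph{other output leg} (at the cost of $X^\iota=-X$, since $\sum_p Xv_p\otimes v^p+\sum_p v_p\otimes Xv^p=0$ for $X\in\mathfrak g$); it can never be moved to the input side of the cup, so it cannot be absorbed into $Y$ as an operator identity. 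Your sign bookkeeping in the displayed one-slot identity also double-counts the minus (you both write $-\mbox{Trace}(b\circ\tilde Y)$ and put $X^\iota w_1=-Xw_1$ inside $\tilde Y$).

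The paper avoids commuting $X(\mathbf l)$ past $b$ altogether. Its one organizing observation is that left-composition with each generator of $B_d$ (hence with every $b\in B_d$, by iterating) sends $Y$ to a sum $\sum\theta(\Box_1\otimes w_{1})\otimes\cdots\otimes\theta(\Box_d\otimes w_{d})$ in which the covectors $w_1,\ldots,w_d$ are untouched and the $\Box_j$ depend only on $b$ and the $u_i$. Granting this, $X(\mathbf l)\circ b\circ Y$ is the same sum with $X^{l_j}\Box_j$ in place of $\Box_j$, the trace factors over the $d$ slots, and \eqref{iotatrace} moves each $X^{l_j}$ onto $w_j$ at the cost of $(-1)^{l_j}$; the result is recognized as $\mbox{Trace}(b\circ Y')$ precisely because the $\Box_j$ do not see the $w$'s. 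Your trace-level fallback (``use \eqref{iotatrace} to transfer $X$ across the pairing'') is exactly this computation, so the plan is completable; but you should discard the operator-level commutation step and the induction on $\sum_i l_i$, and instead record the structural fact about $b\circ Y$ first --- it handles all slots, all powers of $X$, and all generators of $B_d$ in a single stroke.
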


\begin{proof}
For any $s \in \sigma(\mathfrak{S}_{d})\subset B_d$ and $v_1,\ldots,v_d\in V$,
\begin{eqnarray*}
s\circ Y(v_1\otimes \cdots\otimes v_d)&=&s(\langle w_1,v_1\rangle u_1\otimes \cdots\otimes\langle w_d,v_d\rangle u_d)\\
&=&\langle w_1,v_1\rangle u_{s(1)}\otimes \cdots\otimes\langle w_d,v_d\rangle u_{s(d)}\\
&=&(\theta(u_{s(1)}\otimes w_{1})\otimes\cdots\otimes\theta(u_{s(d)}\otimes w_{d}))(v_1\otimes \cdots\otimes v_d).
\end{eqnarray*}
That is,
$$s\circ Y=\theta(u_{s(1)}\otimes w_{1})\otimes\cdots\otimes\theta(u_{s(d)}\otimes w_{d}).$$

Similarly, for $\gamma_{ij} \in B_d $ we have
$$\gamma_{ij}\circ Y=\langle u_{i},u_{j}\rangle\sum_{p=1}^{n}\theta(u_{1}\otimes w_{1})\otimes\cdots\otimes\theta(v_{p}\otimes w_{i})\otimes\cdots\otimes\theta(v^{p}\otimes w_{j}) \otimes\cdots\otimes\theta(u_{d}\otimes w_{d}).$$

Hence we can assume that for any $b\in B_d$,
\begin{equation*}
 b\circ Y=\sum  \theta(\Box_1 \otimes w_{1})\otimes\cdots\otimes \theta(\Box_d \otimes w_{d}).
\end{equation*}

Therefore by \eqref{iotalie} and \eqref{iotatrace}, we have
\begin{align*}
\mbox{Trace}(X(\mathbf{l})\circ b \circ Y)
&=\mbox{Trace}(\sum \theta((X^{l_{1}}\Box_1) \otimes w_{1})\otimes \cdots \otimes\theta((X^{l_{d}}\Box_d) \otimes w_{d}))\\
&= (-1)^{\sum_{i=1}^d l_{i}} \mbox{Trace}(\sum \theta(\Box_1 \otimes (X^{l_{1}}w_{1}))\otimes \cdots \otimes \theta(\Box_d \otimes (X^{l_{d}} w_{d})))\\
&=(-1)^{\sum_{i=1}^d l_{i}} \mbox{Trace}( b \circ Y').
\end{align*}
\end{proof}

\begin{lemma}\label{lemma25}
For any $F\in[\End(V^{\otimes d})^*]^G$,
there exists a $b_{F} \in B_{d}$ such that
\begin{equation}\label{l=0lemma}
F(X_{1}\otimes X_{2}\otimes\cdots\otimes X_{d})=\mbox{Trace}(b_{F} \circ X_{1}\otimes X_{2}\otimes \cdots \otimes X_{d}).
\end{equation}
\end{lemma}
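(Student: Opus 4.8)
The plan is to combine Procesi's description of the invariant ring (Theorem \ref{mith}) with the explicit action formulas worked out inside the proof of Lemma \ref{change X to X'}.

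First I would reduce to a single multilinear trace monomial. The polynomials $f\in\mathbb{C}[\End(V)^{\oplus d}]$ that are homogeneous of degree one in each of the $d$ arguments form a $G$-submodule canonically isomorphic to $(\End(V)^*)^{\otimes d}=\End(V^{\otimes d})^*$, a decomposable tensor $X_1\otimes\cdots\otimes X_d$ being evaluated to $f(X_1,\dots,X_d)$; under this identification an $F\in[\End(V^{\otimes d})^*]^G$ is exactly a $G$-invariant multilinear function on $\End(V)^{\oplus d}$. By Theorem \ref{mith}, $F$ lies in the subalgebra generated by the trace functions $\mbox{Trace}(U_{i_1}\cdots U_{i_k})$ with $U_j\in\{X_j,X_j^{\iota}\}$; extracting the multidegree-$(1,\dots,1)$ part shows that $F$ is a $\mathbb{C}$-linear combination of products
$$M(X_1,\dots,X_d)=\prod_{c}\mbox{Trace}\big(U^{(c)}_{i_1}\cdots U^{(c)}_{i_{k_c}}\big)$$
in which each index $1,\dots,d$ occurs exactly once among all the factors. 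By linearity it then suffices to realize one such $M$ as $\mbox{Trace}(b\circ X_1\otimes\cdots\otimes X_d)$ for some $b\in B_d$.

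Next, since $\theta$ is a bijection and $M$ is multilinear, I would evaluate $M$ on $X_j=\theta(u_j\otimes w_j)$. By Lemma \ref{Tr of product}(1) we have $X_j^{\iota}=\theta(w_j\otimes u_j)$, so by Lemma \ref{Tr of product}(2) each factor of $M$ becomes a cyclic product of pairings; since every index appears once, $M$ turns into a product of exactly $d$ scalars $\langle x,y\rangle$ in which each of the $2d$ vectors $u_1,w_1,\dots,u_d,w_d$ occurs exactly once. Thus $M$ is governed by a perfect matching on this $2d$-element set. Reading $u_j$ as the $j$-th ``top'' node and $w_j$ as the $j$-th ``bottom'' node identifies the matching with a Brauer $d$-diagram, hence with a specific $b\in B_d$ (a product of some $\gamma_{ij}$, accounting for the pairs matched within one row, composed with a permutation for the through-pairs). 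It remains to check $\mbox{Trace}(b\circ X_1\otimes\cdots\otimes X_d)=M(X_1,\dots,X_d)$; for this one feeds the formulas for $s\circ Y$ and $\gamma_{ij}\circ Y$ from the proof of Lemma \ref{change X to X'} into the identity $\mbox{Trace}(\theta(a_1\otimes w_1)\otimes\cdots\otimes\theta(a_d\otimes w_d))=\prod_i\langle w_i,a_i\rangle$, uses the completeness relation for the dual bases $\{v_p\},\{v^p\}$, and verifies that the output is precisely the product of $d$ pairings dictated by the matching.

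I expect this last identification to be the only real obstacle: one must match the bookkeeping of the Procesi trace monomial with that of the Brauer-diagram contraction, keeping track of the signs that arise from the alternating form in the symplectic case; everything else is formal. One may also keep in mind the abstract shortcut $[\End(V^{\otimes d})^*]^G\cong\End_G(V^{\otimes d})=B_d$, the first isomorphism coming from the $G$-invariant perfect pairing $(A,B)\mapsto\mbox{Trace}(AB)$ and the second from Brauer's theorem, which gives existence of $b_F$ at once; but the explicit construction above is what will be needed in the sequel, so I would carry it out.
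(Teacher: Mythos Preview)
Your proposal is correct, but the paper takes precisely the ``abstract shortcut'' you defer to the end. The paper defines the linear map $J:B_d\to\End(V^{\otimes d})^*$, $J(b)(Y)=\mbox{Trace}(b\circ Y)$, checks that $J$ lands in the $G$-invariants (because $G$ and $B_d$ commute on $V^{\otimes d}$), observes that $J$ is injective by nondegeneracy of the trace form, and concludes by the dimension count $\dim B_d=\dim\End_G(V^{\otimes d})=\dim[\End(V^{\otimes d})^*]^G$. Theorem~\ref{mith} is not invoked here at all; it enters only later, in Proposition~\ref{kprop}.

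Your main route via Procesi's generators is a genuinely different, constructive argument: you exhibit $b_F$ diagram-by-diagram rather than proving it exists by counting. This buys an explicit recipe, at the cost of the sign bookkeeping you flag (in the symplectic case each pairing $\langle x,y\rangle$ is order-sensitive, so the Brauer-diagram contraction and the Procesi trace monomial may differ by a sign that must be absorbed into $b$). One correction, though: you assert that the explicit construction ``is what will be needed in the sequel,'' but it is not. Lemma~\ref{gele1} uses only the defining identity $\mbox{Trace}(b_F\circ Y)=F(Y)$, expands both sides via Lemma~\ref{Tr of product}, and tracks how the pairing expression changes when $Y$ is replaced by $Y'$; nowhere does it require a formula for $b_F$ itself. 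So the abstract argument suffices for everything downstream.
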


\begin{proof}
Define a linear map
$J:B_{d} \longrightarrow \End(V^{\otimes d})^*$ by
$$J(b)(X_{1}\otimes X_{2} \cdots \otimes X_{d})=\mbox{Trace}(b \circ X_{1}\otimes X_{2} \cdots \otimes X_{d}).$$

For any $g\in G$, we check that
\begin{align*}
(g\cdot J(b))(X_{1}\otimes X_{2} \cdots \otimes X_{d})
&=J(b)(g^{-1}\cdot(X_{1}\otimes X_{2} \cdots \otimes X_{d}))\\
&=\mbox{Trace}(b \circ g \circ X_{1}\otimes X_{2} \cdots \otimes X_{d} \circ g^{-1})\\
&=\mbox{Trace}(g \circ b \circ X_{1}\otimes X_{2} \cdots \otimes X_{d} \circ g^{-1})\\
&=\mbox{Trace}(b \circ X_{1}\otimes X_{2} \cdots \otimes X_{d})\\
&=J(b)(X_{1}\otimes X_{2} \cdots \otimes X_{d}),
\end{align*}
where the third equality holds because the actions of $G$ and $B_d$ on $V^{\otimes d}$ commute with each other. Therefore $J(B_d)\in[\End(V^{\otimes d})^*]^G$.

Non-degeneracy of $\mbox{Trace}(-\circ-)$ on $\End(V)^{\otimes d}$ implies that $J$ is injective.
So $$\dim B_d=\dim\End_{G}(V^{\otimes d})=\dim[\End(V^{\otimes d})^*]^G$$
implies that $$J(B_d)=[\End(V^{\otimes d})^*]^G.$$ The lemma then follows by taking $b_F=J^{-1}(F)$.
\end{proof}

Any $F\in[\End(V^{\otimes d})^*]^G$ can be viewed as a function $\widehat{F}\in\mathbb{C}[\mbox{End}(V)^{\oplus d}]$  by
$$\widehat{F}(X_{1},X_2,\ldots, X_{d}):=F(X_{1}\otimes X_{2}\cdots\otimes X_{d}).$$
Thanks to Theorem \ref{mith} and the fact that $F$ is linear in variables $X_{1},\ldots, X_{d}$, we know that $F$ should be a sum of functions in terms of
$$\mbox{Trace}(U_{j_{1}} \cdots U_{j_{s}})\mbox{Trace}(U_{j_{s+1}} \cdots U_{j_{k}})\cdots \mbox{Trace}(U_{j_{t+1}} \cdots U_{j_{d}}),$$
where $U_{j_{i}}=X_{j_{i}}$ or $X_{j_{i}}^{\iota}$, and $(j_1,\ldots,j_s,j_{s+1},\ldots,j_k,\ldots,j_{t+1},\ldots,j_d)$ is an arrangement of $\{1,2,\ldots,d \}$.

\begin{lemma}\label{gele1}
Assume $$F(X_1\otimes\cdots\otimes X_d)=\mbox{Trace}(U_{j_{1}} \cdots U_{j_{s}})\cdots \mbox{Trace}(U_{j_{t+1}} \cdots U_{j_{d}})\in[\End(V^{\otimes d})^*]^G$$
where $U_{j_{i}}=X_{j_{i}}$ or $X_{j_{i}}^{\iota}$, and $(j_1,\ldots,j_s,\ldots,j_{t+1},\ldots,j_d)$ is an arrangement of $\{1,2,\ldots,d \}$.
Let $\textbf{l}_{1}=(l^{(1)}_{1}, \ldots, l^{(1)}_{d}), \textbf{l}_{2}=(l^{(2)}_{1}, \ldots, l^{(2)}_{d})\in\mathbb{Z}_{\geq 0}^{d}$ such that
$l^{(1)}_i=\left\{
\begin{array}{ll}
l_i,& (\mbox{if } U_i=X_i);\\
0,& (\mbox{if } U_i=X^\iota_i)
\end{array}\right.$
and
$l^{(2)}_i=\left\{
\begin{array}{ll}
l_i,& (\mbox{if } U_i=X^\iota_i);\\
0,& (\mbox{if } U_i=X_i).
\end{array}\right.$
Let $b_{F}\in B_d$ be the element determined by $F$ as in Lemma \ref{lemma25}.
For any $X\in\mathfrak{g}$, we have
\begin{align*}
&\mbox{Trace}(X(\mathbf{l}_{2}) \circ b_{F} \circ X(\mathbf{l}_{1}) \circ X_{1} \otimes \dots \otimes X_{d})=\\
&(-1)^{\sum_{i=1}^{d}l_i^{(2)}}\mbox{Trace}(X^{l_{j_{1}}}U_{j_{1}} \cdots X^{l_{j_{s}}} U_{j_{s}})\cdots
\mbox{Trace}(X^{l_{j_{t+1}}}U_{j_{t+1}} \cdots X^{l_{j_{d}}}U_{j_{d}}).
\end{align*}

\end{lemma}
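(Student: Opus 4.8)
The plan is to reduce to rank‑one matrices and then chain the three preceding lemmas together. Both sides of the asserted identity are multilinear in $X_1,\dots,X_d$: on the left, $X(\mathbf{l}_1)\circ(X_1\otimes\cdots\otimes X_d)$ is linear in each $X_i$ and the remaining operations are linear; on the right, each index occurs in exactly one of the trace factors, so the product is again multilinear. Since $\theta\colon V^{\otimes 2}\to\End(V)$ is a linear bijection, it therefore suffices to treat the case $X_i=\theta(u_i\otimes w_i)$ with $u_i,w_i\in V$. Using the identity $X^{l}\circ\theta(u\otimes w)=\theta(X^{l}u\otimes w)$, which is immediate from \eqref{isotheta}, I would first rewrite
\[
X(\mathbf{l}_1)\circ X_1\otimes\cdots\otimes X_d=\theta(X^{l_1^{(1)}}u_1\otimes w_1)\otimes\cdots\otimes\theta(X^{l_d^{(1)}}u_d\otimes w_d)=:Y,
\]
which is exactly of the shape required to apply Lemma \ref{change X to X'}.

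Applying Lemma \ref{change X to X'} to this $Y$ with $b=b_F$ and the tuple $\mathbf{l}_2$ moves $X(\mathbf{l}_2)$ from the first tensor slot of each $\theta$ onto the second slot, at the cost of the sign $(-1)^{\sum_i l_i^{(2)}}$:
\[
\mbox{Trace}(X(\mathbf{l}_2)\circ b_F\circ Y)=(-1)^{\sum_i l_i^{(2)}}\,\mbox{Trace}(b_F\circ Y'),\qquad Y'=\theta(X^{l_1^{(1)}}u_1\otimes X^{l_1^{(2)}}w_1)\otimes\cdots\otimes\theta(X^{l_d^{(1)}}u_d\otimes X^{l_d^{(2)}}w_d).
\]
By the defining property of $b_F$ in Lemma \ref{lemma25}, the trace on the right equals $F(Y')=F(Z_1\otimes\cdots\otimes Z_d)$, where $Z_i:=\theta(X^{l_i^{(1)}}u_i\otimes X^{l_i^{(2)}}w_i)$.

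It remains to evaluate $F$ on the decomposable tensor $Z_1\otimes\cdots\otimes Z_d$ via its given trace-product expression, which needs $Z_i$ and $Z_i^{\iota}$; by Lemma \ref{Tr of product}(1) the latter is $Z_i^{\iota}=\theta(X^{l_i^{(2)}}w_i\otimes X^{l_i^{(1)}}u_i)$. Here the precise definition of $\mathbf{l}_1,\mathbf{l}_2$ is what makes things work: if $U_i=X_i$ then $l_i^{(1)}=l_i$, $l_i^{(2)}=0$, so the $i$-th factor inside the traces of $F(Z_1\otimes\cdots\otimes Z_d)$ is $Z_i=\theta(X^{l_i}u_i\otimes w_i)=X^{l_i}X_i=X^{l_i}U_i$; if $U_i=X_i^{\iota}$ then $l_i^{(1)}=0$, $l_i^{(2)}=l_i$, so that factor is $Z_i^{\iota}=\theta(X^{l_i}w_i\otimes u_i)=X^{l_i}X_i^{\iota}=X^{l_i}U_i$. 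In either case the $i$-th factor is exactly $X^{l_i}U_i$, hence
\[
F(Z_1\otimes\cdots\otimes Z_d)=\mbox{Trace}(X^{l_{j_1}}U_{j_1}\cdots X^{l_{j_s}}U_{j_s})\cdots\mbox{Trace}(X^{l_{j_{t+1}}}U_{j_{t+1}}\cdots X^{l_{j_d}}U_{j_d}),
\]
and stringing together the three identities above proves the lemma. The one genuinely delicate point is this last matching: one must check that the $\iota$-transposition in Lemma \ref{Tr of product}(1) interchanges the two tensor slots of $\theta$ in precisely the way that turns the split $\mathbf{l}=\mathbf{l}_1+\mathbf{l}_2$ into a uniform left-multiplication by $X^{l_i}$, and in particular accounts for the sign exponent $\sum_i l_i^{(2)}$. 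Everything else is a mechanical concatenation of Lemmas \ref{Tr of product}, \ref{change X to X'} and \ref{lemma25}.
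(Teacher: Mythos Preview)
Your proof is correct and follows essentially the same path as the paper's: reduce to rank-one $X_i=\theta(u_i\otimes w_i)$, apply Lemma~\ref{change X to X'} to move $X(\mathbf{l}_2)$ across $b_F$ at the cost of the sign $(-1)^{\sum_i l_i^{(2)}}$, and then evaluate $F$ on the resulting modified tensor. The only cosmetic difference is in the final step: the paper expands $\mbox{Trace}(b_F\circ Y')$ into a product of inner products via Lemma~\ref{Tr of product}(2) and then reassembles, whereas you observe directly that each factor $Z_i$ or $Z_i^{\iota}$ equals $X^{l_i}U_i$ and substitute straight into the trace formula for $F$; these amount to the same computation.
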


\begin{proof}
Without loss of generality, it is enough to prove the lemma for $X_{i}=\theta(u_{i}\otimes w_{i})\in\End(V)$, i.e. $X_i(v)=\langle w_i,v\rangle u_i$ for any $v\in V$.
Set $$Y=X_1\otimes X_2\otimes \cdots\otimes X_d=\theta(u_{1}\otimes w_{1})\otimes \cdots \otimes \theta(u_{d}\otimes w_{d})\in\mbox{\End}(V^{\otimes d}).$$

For any $\mathbf{k}=(k_{1},\ldots, k_{d}) \in  \mathbb{Z}_{\geq 0}^{d}$, it is clear that
$$X(\mathbf{k})\circ Y=\theta(X^{k_1}u_{1}\otimes w_{1})\otimes \cdots \otimes \theta(X^{k_d}u_{d}\otimes w_{d})\in\mbox{\End}(V^{\otimes d}).$$
Therefore by Lemma \ref{change X to X'} we have
\begin{equation*}
\mbox{Trace}(X(\mathbf{l}_{2}) \circ b_F \circ X(\mathbf{l}_{1})\circ Y)=(-1)^{\sum_{i=1}^{d}l_i^{(2)}}\mbox{Trace}(b_F \circ Y')
\end{equation*}
where $Y'=\theta(u'_{1}\otimes w'_{1})\otimes \cdots \otimes \theta(u'_{d}\otimes w'_{d})$ with $u'_{i}=X^{l_i}u_i,w'_{i}=w_{i}$ if $U_{i}=X_{i}$,
and $u'_{i}=u_i,w'_{i}=X^{l_i}w_{i}$ if $U_{i}=X^{\iota}_{i}$.

Note that
\begin{align*}
&\mbox{Trace}(b_{F} \circ Y)\\
&=\mbox{Trace}(U_{j_{1}} \cdots U_{j_{s}})\cdots \mbox{Trace}(U_{j_{t+1}} \cdots U_{j_{d}})\quad\quad\quad (\mbox{by Lemma } \ref{lemma25})\\
&=\langle\omega_{j_{1}},\overline{\omega}_{j_{2}}\rangle\langle\omega_{j_{2}},\overline{\omega}_{j_{3}}\rangle\cdots\langle\omega_{j_{s}},\overline{\omega}_{j_{1}}\rangle \cdots\cdots\langle\omega_{j_{t+1}},\overline{\omega}_{j_{t+2}}\rangle\langle\omega_{j_{t+2}},\overline{\omega}_{j_{t+3}}\rangle\cdots \langle\omega_{j_{d}},\overline{\omega}_{j_{t+1}}\rangle \\
&\quad\quad\quad\quad\quad\quad\quad\quad\quad\quad\quad\quad\quad\quad\quad\quad\quad\quad\quad\quad\quad\quad
(\mbox{by Lemma }\ref{Tr of product})
\end{align*}
where $\omega_{i}=w_{i}$, $\overline{\omega_{i}}=u_{i}$ if $U_{i}=X_{i}$, and $\omega_{i}=u_{i}$, $\overline{\omega_{i}}=w_{i}$ if $U_{i}=X_{i}^\iota$.

Replacing $Y$ by $Y'$ in the above formula, we see that each $\omega_{i}$ unchanges while each $\overline{\omega}_{i}$ is replaced by $X^{l_{i}}\overline{\omega}_{i}$. Finally by using Lemma \ref{Tr of product} again, we get that
\begin{align*}
&\mbox{Trace}(X(\mathbf{l}_{2}) \circ b_F \circ X(\mathbf{l}_{1})\circ Y)=(-1)^{\sum_{i=1}^{d}l_i^{(2)}}\mbox{Trace}(b_F \circ Y')\\
&=(-1)^{\sum_{i=1}^{d}l_i^{(2)}}\mbox{Trace}(X^{l_{j_{1}}}U_{j_{1}} \cdots X^{l_{j_{s}}} U_{j_{s}})\cdots
\mbox{Trace}(X^{l_{j_{t+1}}}U_{j_{t+1}} \cdots X^{l_{j_{d}}}U_{j_{d}}).
\end{align*}
\end{proof}

\subsection{Morphisms of affine varieties}
Let $\mathbb{A}$ be an affine variety, $G$ be a reductive group acting on $\mathbb{A}$, and $\mathbb{M}$ be a linear representation of $G$. An affine variety  $\mathbb{A}$ is said to be \emph{normal} if the regular function ring $\mathbb{C}[\mathbb{A}]$ is integrally closed.

The following two lemmas were given in \cite{KP1}.
\begin{lemma}\label{exle}
For any $G$-stable closed subvariety $\mathbb{W} \subseteq \mathbb{A}$ and $G$-equivariant morphism $\psi : \mathbb{W}\rightarrow \mathbb{M}$,
there exists a $G$-equivariant morphism $\Psi: \mathbb{A} \rightarrow \mathbb{M}$ extending $\psi$.
\end{lemma}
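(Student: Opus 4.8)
The statement to prove is Lemma~\ref{exle}: any $G$-equivariant morphism $\psi:\mathbb{W}\to\mathbb{M}$ from a $G$-stable closed subvariety extends to a $G$-equivariant morphism $\Psi:\mathbb{A}\to\mathbb{M}$. The plan is to first ignore equivariance and produce \emph{some} extension, then average it against the $G$-action using reductivity. Concretely, since $\mathbb{W}\subseteq\mathbb{A}$ is closed, the restriction map $\mathbb{C}[\mathbb{A}]\to\mathbb{C}[\mathbb{W}]$ is surjective; choosing coordinates on the vector space $\mathbb{M}$, the morphism $\psi$ is given by finitely many regular functions $\psi_1,\dots,\psi_m\in\mathbb{C}[\mathbb{W}]$, and I can lift each $\psi_i$ to some $\tilde\Psi_i\in\mathbb{C}[\mathbb{A}]$, giving a (non-equivariant) morphism $\tilde\Psi:\mathbb{A}\to\mathbb{M}$ with $\tilde\Psi|_{\mathbb{W}}=\psi$.

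The second step is the averaging. The space $\Hom(\mathbb{A},\mathbb{M})=\mathbb{C}[\mathbb{A}]\otimes\mathbb{M}$ carries a $G$-action (via the action on $\mathbb{A}$ and on $\mathbb{M}$), and a morphism is $G$-equivariant precisely when the corresponding element is $G$-invariant. Because $\mathbb{M}$ is a rational representation, $\tilde\Psi$ lies in a finite-dimensional $G$-submodule $\mathbb{N}\subseteq\mathbb{C}[\mathbb{A}]\otimes\mathbb{M}$. Since $G$ is reductive (acting on a finite-dimensional module over a field of characteristic zero), $\mathbb{N}$ decomposes as $\mathbb{N}^G\oplus\mathbb{N}'$ where $\mathbb{N}'$ is the sum of all nontrivial isotypic components; let $\pi:\mathbb{N}\to\mathbb{N}^G$ be the corresponding Reynolds projection. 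Set $\Psi:=\pi(\tilde\Psi)$, which is a $G$-equivariant morphism $\mathbb{A}\to\mathbb{M}$. It remains to check $\Psi|_{\mathbb{W}}=\psi$: restriction to $\mathbb{W}$ is a $G$-equivariant linear map $\mathbb{C}[\mathbb{A}]\otimes\mathbb{M}\to\mathbb{C}[\mathbb{W}]\otimes\mathbb{M}$, hence commutes with the Reynolds operators on both sides; since $\tilde\Psi|_{\mathbb{W}}=\psi$ is already $G$-invariant, applying $\pi$ and restricting gives $\Psi|_{\mathbb{W}}=\pi(\psi)=\psi$.

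The main obstacle — really the only subtlety — is making the averaging rigorous when $\mathbb{C}[\mathbb{A}]$ is infinite-dimensional: one must observe that the $G$-action on $\mathbb{C}[\mathbb{A}]$ is \emph{locally finite} (every regular function lies in a finite-dimensional $G$-stable subspace, as $G$ acts algebraically on the affine variety $\mathbb{A}$), so that $\tilde\Psi$ genuinely sits inside a finite-dimensional submodule on which complete reducibility and the Reynolds operator are available. Everything else is formal: surjectivity of restriction for a closed subvariety, and naturality of the Reynolds operator with respect to the $G$-equivariant restriction map. Since this is a lemma quoted from \cite{KP1}, I would keep the write-up to exactly this two-step outline.
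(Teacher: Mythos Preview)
Your argument is correct and is precisely the standard proof: lift to a non-equivariant extension using surjectivity of $\mathbb{C}[\mathbb{A}]\twoheadrightarrow\mathbb{C}[\mathbb{W}]$, then apply the Reynolds operator for the reductive group $G$, using local finiteness of the rational $G$-action on $\mathbb{C}[\mathbb{A}]$ to reduce to a finite-dimensional submodule. The naturality of the Reynolds operator with respect to the $G$-equivariant restriction map is exactly what guarantees $\Psi|_{\mathbb{W}}=\psi$.

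Note that the paper does not supply its own proof of this lemma; it simply cites \cite{KP1}. So there is no ``paper's approach'' to compare against---your write-up is the argument one would give, and it matches what Kraft--Procesi do. One cosmetic remark: rather than passing to a finite-dimensional $\mathbb{N}$, you can equivalently say that for a reductive $G$ in characteristic zero every rational $G$-module $M$ (in particular $\mathbb{C}[\mathbb{A}]\otimes\mathbb{M}$) admits a functorial Reynolds projection $M\to M^G$, and then the compatibility with restriction is immediate from functoriality. This avoids choosing $\mathbb{N}$ but is logically the same content.
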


\begin{lemma} \label{asle}
If $e\in\mathbb{A}$ satisfies that
\begin{itemize}
\item[(1)]$\overline{G\cdot e}\subset \mathbb{A}$ is normal; and
\item[(2)] $\dim(\overline{G\cdot e}\setminus G\cdot e)\leq\dim(G\cdot e)-2$,
\end{itemize}
then for any $m\in \mathbb{M}^{G_{e}}$, there exists a $G$-equivariant morphism $\Psi:\mathbb{A} \rightarrow \mathbb{M}$ such that $\Psi(e)=m$.
\end{lemma}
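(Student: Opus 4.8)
The plan is to reduce, via Lemma~\ref{exle}, to the situation $\mathbb{A}=\overline{G\cdot e}$, and then to obtain the desired morphism by extending the tautological one defined on the open dense orbit $G\cdot e$. So first I would construct a $G$-equivariant morphism $\psi_{0}\colon G\cdot e\to\mathbb{M}$ with $\psi_{0}(e)=m$. Since we work over $\mathbb{C}$, the orbit map $G\to G\cdot e$, $g\mapsto g\cdot e$, identifies the orbit with the homogeneous space $G/G_{e}$; and the morphism $G\to\mathbb{M}$, $g\mapsto g\cdot m$, is constant on the left cosets of $G_{e}$ precisely because $m\in\mathbb{M}^{G_{e}}$, as $(gh)\cdot m=g\cdot(h\cdot m)=g\cdot m$ for all $h\in G_{e}$. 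Since $G\to G/G_{e}$ is faithfully flat, this morphism descends to a morphism $\psi_{0}\colon G\cdot e\to\mathbb{M}$ with $\psi_{0}(g\cdot e)=g\cdot m$; by construction $\psi_{0}$ is $G$-equivariant and $\psi_{0}(e)=m$.

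Next I would extend $\psi_{0}$ across the boundary. Put $Z:=\overline{G\cdot e}$, an irreducible affine variety in which $G\cdot e$ is open and dense, with closed complement $Z\setminus(G\cdot e)$ of codimension at least $2$ in $Z$ by hypothesis~(2). Since $Z$ is normal by hypothesis~(1), the restriction map $\mathbb{C}[Z]\to\mathcal{O}(G\cdot e)$ is an isomorphism: a function regular off a closed subset of codimension $\geq 2$ in a normal variety extends uniquely to a regular function. Applying this to each coordinate of $\mathbb{M}\cong\mathbb{A}^{N}$ shows that $\psi_{0}$ extends to a morphism $\psi\colon Z\to\mathbb{M}$, still with $\psi(e)=m$. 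To see $\psi$ remains $G$-equivariant, I would compare the two morphisms $Z\times G\to\mathbb{M}$ sending $(x,g)$ to $\psi(g\cdot x)$ and to $g\cdot\psi(x)$: they agree on the dense open subset $(G\cdot e)\times G$, hence everywhere, because $\mathbb{M}$ is separated and $Z\times G$ is reduced.

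Finally I would invoke Lemma~\ref{exle} with $\mathbb{W}=\overline{G\cdot e}$ and the $G$-equivariant morphism $\psi$: it produces a $G$-equivariant morphism $\Psi\colon\mathbb{A}\to\mathbb{M}$ extending $\psi$, so that $\Psi(e)=\psi(e)=m$, as required. The step I expect to be the crux is the extension across the boundary, since it is the only place where both hypotheses are used and it rests on the algebraic Hartogs property for normal varieties; dropping either normality or the codimension bound breaks the argument. The remaining ingredients — descent of $\psi_{0}$ to the orbit and propagation of equivariance by a density argument — are formal and rely only on working in characteristic zero, which is in force throughout.
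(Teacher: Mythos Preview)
Your proposal is correct and is precisely the standard argument from \cite{KP1}; the paper itself does not give a proof but simply cites that reference for both Lemma~\ref{exle} and Lemma~\ref{asle}. There is nothing to add: the three steps you outline---descent of $g\mapsto g\cdot m$ to the orbit, extension across the boundary via algebraic Hartogs on the normal closure, and extension to all of $\mathbb{A}$ via Lemma~\ref{exle}---are exactly what is behind the cited result.
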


Specify $G=O(V)$ or $SP(V)$, $\mathbb{A}=\mbox{Lie}(G)=\mathfrak{g}\subset\mbox{End}(V)$ and $\mathbb{M}=\End(V^{\otimes d})$.

\begin{remark}
For any nilpotent element $e\in\mathfrak{g}$, the second condition in Lemma \ref{asle} always holds (c.f. Lemma 8.4 \cite{Jan2}).
\end{remark}

\subsection{$G$-equivariant morphisms $\mbox{Mor}_{G}(\mathfrak{g},\End(V^{\otimes d}))$}
Let $R:=\mathbb{C}[\mathfrak{g}]^{G}$ and denote by $\mbox{Mor}_{G}(\mathfrak{g},\mbox{End}(V^{\otimes d}))$ the set of all $G$-equivariant morphism (of varieties) from $\mathfrak{g}$ to $\mbox{End}(V^{\otimes d})$. There is an $R$-module structure on $\mbox{Mor}_{G}(\mathfrak{g},\End(V^{\otimes d}))$ given by
$$(r\circ f)(X)=r(X)f(X), \quad (\forall r\in R, f\in \mbox{Mor}_{G}(\mathfrak{g},\End(V^{\otimes d})), X\in \mathfrak{g}).$$
Let $S\subset\mbox{Mor}_{G}(\mathfrak{g},\End(V^{\otimes d}))$ be the subset consisting of those $\Psi\in  \mbox{Mor}_{G}(\mathfrak{g},\End(V^{\otimes d}))$ such that
$$\Psi(X)=(X^{l_{1}'} \otimes \dots \otimes X^{l_{d}'}) \circ b \circ (X^{l_{1}} \otimes \dots  \otimes X^{l_{d}}), \quad (\forall X\in\mathfrak{g})$$ for some $b\in B_{d}$ and $l'_1,\ldots,l'_d,l_1,\ldots,l_d\in\mathbb{Z}_{\geq 0}$.

\begin{prop}\label{kprop}
As an $R$-module, $\mbox{\em Mor}_{G}(\mathfrak{g},\End(V^{\otimes d}))$ is generated by $S$.
\end{prop}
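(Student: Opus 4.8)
The plan is to follow the strategy of Kraft--Procesi (as in \cite{KP1}) adapted to the orthogonal/symplectic setting, using the trace description of invariants from Theorem \ref{mith} together with the Brauer algebra computations of Lemmas \ref{change X to X'}--\ref{gele1}. The point is that an arbitrary $\Psi\in\mbox{Mor}_G(\mathfrak{g},\End(V^{\otimes d}))$ is the same datum as a $G$-equivariant polynomial map $\mathfrak{g}\to\End(V^{\otimes d})$, equivalently an element of $\bigl(\mathbb{C}[\mathfrak{g}]\otimes\End(V^{\otimes d})\bigr)^G$, equivalently (pairing $\End(V^{\otimes d})$ with itself via the non-degenerate trace form) a $G$-invariant element of $\mathbb{C}[\mathfrak{g}\oplus\End(V^{\otimes d})^{*}]$ that is linear in the second batch of variables. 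Since $\End(V^{\otimes d})^{*}\cong\End(V)^{\otimes d\,*}\cong(V^{\otimes 2})^{\otimes d\,*}$ and more to the point can be realized inside $\mbox{End}(V)^{\oplus N}$ for suitable $N$ by writing a generic tensor $X_1\otimes\cdots\otimes X_d$ with $X_i\in\End(V)$, we are exactly in the situation governed by Theorem \ref{mith}: every element of $\mathbb{C}[\mathfrak{g}\oplus\End(V)^{\oplus d}]^G$ is a polynomial in traces of words in the variables (an element $X\in\mathfrak{g}$ and the $X_i$, together with their $\iota$-adjoints, where $X^\iota=-X$ on $\mathfrak{g}$ by \eqref{iotalie}).

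Here is the sequence of steps. First, reduce to the case where $\Psi$ is given, after evaluating on $Y=X_1\otimes\cdots\otimes X_d$, by a single invariant $\mbox{Trace}(\Psi(X)\circ Y)$ which, by Theorem \ref{mith} and linearity in each $X_i$, is an $R$-linear combination of products of traces of words
$$\mbox{Trace}\bigl(X^{a_0}U_{j_1}X^{a_1}U_{j_2}\cdots X^{a_{k-1}}U_{j_k}X^{a_k}\bigr)\cdots\mbox{Trace}(\cdots),$$
where each $U_{j_i}$ is $X_{j_i}$ or $X_{j_i}^{\iota}$, the multi-index $(j_1,\ldots,j_d)$ ranges over a partition-into-cycles of $\{1,\dots,d\}$, and the $X$-powers $a_\bullet$ are absorbed: cyclically each trace factor has the shape $\mbox{Trace}(X^{l_{j_1}}U_{j_1}\cdots X^{l_{j_k}}U_{j_k})$ with the residual scalar functions of $X$ alone folded into the coefficient in $R$. (Here one uses $X\in\mathfrak{g}$ so that $(X^{l})^{\iota}=(-1)^{l}X^{l}$, which lets us move all $\iota$'s onto the $X_i$ at the cost of a sign.) Second, observe that Lemma \ref{gele1} says precisely that such a product of traces is, up to the explicit sign $(-1)^{\sum l_i^{(2)}}$, equal to $\mbox{Trace}\bigl(X(\mathbf{l}_2)\circ b_F\circ X(\mathbf{l}_1)\circ Y\bigr)$ for a suitable $b_F\in B_d$ coming from Lemma \ref{lemma25} (the element $b_F$ encodes the cycle structure $(j_1,\ldots,j_d)$ via products of $\gamma_{ij}$'s and $\sigma(\mathfrak{S}_d)$), and $\mathbf{l}_1,\mathbf{l}_2$ record which $U_i$ are $X_i$ versus $X_i^\iota$. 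Third, since $Y$ was an arbitrary point of $\End(V)^{\oplus d}$ and the trace pairing on $\End(V^{\otimes d})$ is non-degenerate, the identity of the two trace expressions forces the corresponding elements of $\End(V^{\otimes d})$ to agree: thus each summand of $\Psi$ is, after absorbing the sign, of the form $X\mapsto r(X)\,(X^{l_1'}\otimes\cdots\otimes X^{l_d'})\circ b\circ(X^{l_1}\otimes\cdots\otimes X^{l_d})$ with $r\in R$ and $b\in B_d$, i.e. lies in $R\circ S$. Summing over the finitely many summands gives $\Psi\in R\text{-span of }S$, which is the assertion.

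The main obstacle, and where care is needed, is the second bookkeeping step: organizing the conversion of a general trace-of-words invariant of $(X;X_1,\dots,X_d)$ (with the $X_i$ each appearing exactly once across all the trace factors, but interleaved with arbitrary powers of $X$ and with each $X_i$ possibly replaced by $X_i^{\iota}$) into the precise two-sided form $X(\mathbf{l}_2)\circ b\circ X(\mathbf{l}_1)$ appearing in the definition of $S$, and checking that the powers of $X$ that sit \emph{between} two $U$'s in a single trace factor are correctly matched by the construction in Lemma \ref{gele1} — in particular that one never needs a power of $X$ acting in a tensor slot that is not indexed by the corresponding $U_i$. This is exactly what the hypotheses on $\mathbf{l}_1,\mathbf{l}_2$ in Lemma \ref{gele1} arrange (the power $l_i$ is placed on the left if $U_i=X_i$ and on the right if $U_i=X_i^{\iota}$), so the proof amounts to quoting Lemma \ref{gele1} term by term; the remaining subtlety is merely that the scalar $R$-coefficients (arising from trace factors involving only $X$, such as $\mbox{Trace}(X^{m})$, or from separating off a tensor slot as in the $\gamma_{ij}$ relation) are genuinely $G$-invariant regular functions on $\mathfrak{g}$, which is clear since they are themselves traces of words in $X$ alone. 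I would close by remarking that this Proposition, combined with Lemma \ref{asle} and the preceding Remark, is what will feed into the proof of Theorem \ref{mainresult}: evaluating the generators of $\mbox{Mor}_G(\mathfrak{g},\End(V^{\otimes d}))$ at $e$ surjects onto $\End(V^{\otimes d})^{G_e}$ under the normality hypothesis, and the images of the elements of $S$ at $e$ manifestly lie in the algebra generated by $\sigma(\mathfrak{S}_d)$, the $\gamma_{ij}$, and the $e^{(i)}$.
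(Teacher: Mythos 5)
Your proposal is correct and follows essentially the same route as the paper: pass to the invariant function $(X,X_1,\ldots,X_d)\mapsto\mbox{Trace}(\Psi(X)\circ X_1\otimes\cdots\otimes X_d)$, use injectivity of this trace pairing, decompose the invariant via Theorem \ref{mith} together with linearity in the $X_i$, and then quote Lemma \ref{gele1} term by term to land in $R\cdot J(S)$. The only point you pass over lightly — that Theorem \ref{mith} applies to invariants on $\mathfrak{g}\oplus\End(V)^{\oplus d}$ rather than on $\End(V)^{\oplus(d+1)}$ — is handled in the paper by first extending $\Psi$ to a $G$-equivariant morphism on $\End(V)$ via Lemma \ref{exle}, and is in any case covered by the surjectivity of restriction on invariants for the reductive group $G$.
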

\begin{proof}
Set $N=\mathfrak{g}\oplus \End(V)^{\oplus d}$. The embedding $R \hookrightarrow \mathbb{C}[N]^{G}$ induces an $R$-module structure on $\mathbb{C}[N]^{G}$.   Consider the $R$-module homomorphism
$$J: \mbox{Mor}_{G}(\mathfrak{g},\End(V)^{\oplus d}) \longrightarrow \mathbb{C}[N]^{G}, \quad \Psi\mapsto J(\Psi)$$
defined by $$J(\Psi)(X, X_1,X_2,\ldots,X_d)= \mbox{Trace}(\Psi(X)\circ (X_1\otimes X_2\otimes \cdots\otimes X_d))$$
for any $X\in\mathfrak{g}$ and $X_1,X_2,\ldots,X_d\in\End(V)$.
Observe that $J(\Psi)$ is linear in variables $X_1,X_2,\ldots,X_d$.

Non-degeneracy of $\mbox{Trace}(\End(V^{\otimes d})\circ\End(V^{\otimes d}))$ implies that $J$ is injective. Therefore we only need to prove $R J(S)=J (\mbox{Mor}_{G}(\mathfrak{g},\End(V^{\otimes d})))$.

\noindent\textbf{Claim:} $ J(\Psi)(X,X_{1},\ldots, X_{d})$ is in form of
\begin{equation*}
\sum r(X)\mbox{Trace}(X^{l_{1}}U_{j_{1}}X^{l_{2}}U_{j_{2}} \cdots X^{l_{s}}U_{j_{s}})\cdots \mbox{Trace}(X^{l_{t+1}}U_{j_{t+1}}X^{l_{t+2}}U_{j_{t+2}} \cdots X^{l_{d}}U_{j_{d}})
\end{equation*}
where $l_i\in\mathbb{Z}_{\geq 0}$, $U_{i}=X_{i}$ or $X^{\iota}_{i}$, $(j_1,\ldots,j_s,\cdots, j_{t+1},\ldots,j_d)$ is an arrangement of $\{1,2,\ldots,d \}$ and $r\in R$.

\noindent\textit{Proof of the claim:} \\
Choose a $G$-equivariant extension $\Psi':\mbox{End}(V)\longrightarrow M$ of $\Psi$ by Lemma \ref{exle}. Then $J(\Psi')$ can be viewed as a $G$-invariant function on $\mbox{End}(V)^{\oplus(d+1)}$. Thus by \eqref{iotalie} and Theorem \ref{mith}, we can see that $J(\Psi)(X,X_{1},\ldots,X_{d})$ is in form of
\begin{equation*}
\sum r(X)\mbox{Trace}(X^{l_{1}}U_{j_{1}}X^{l_{2}}U_{j_{2}} \cdots X^{l_{s}}U_{j_{s}})\cdots \mbox{Trace}(X^{l_{t+1}}U_{j_{t+1}}X^{l_{t+2}}U_{j_{t+2}} \cdots
X^{l_{k}}U_{j_{k}})
\end{equation*}
with $l_i\in\mathbb{Z}_{\geq 0}$, $U_{i}=X_{i}$ or $X^{\iota}_{i}$, $r\in R$ and $ j_{i} \in \{1,2,\ldots,d \}$ for $i=1,2,\ldots,k$. Notice that $J(\Psi)(X, X_{1},\ldots, X_{d})$ is linear in variables $X_{1},\ldots, X_d$. So we have $k=d$ and $j_{i_{1}}\neq j_{i_{2}}$ if
$i_1\neq i_{2}$. We complete the proof of the claim.

Thanks to the claim, we only need to show that
\begin{equation*}
\mbox{Trace}(X^{l_{1}}U_{j_{1}}X^{l_{2}}U_{j_{2}} \cdots X^{l_{s}}U_{j_{s}})\cdots \mbox{Trace}(X^{l_{t+1}}U_{j_{t+1}}X^{l_{t+2}}U_{j_{t+2}} \cdots X^{l_{d}}U_{j_{d}}) \in J(S),
\end{equation*}
which is obvious by Lemma \ref{gele1}.
\end{proof}

\subsection{Vust's Theorem for $\O(V)$ and $\SP(V)$}
Let $e\in \mathfrak{g}$ be a nilpotent element and recall the notation $e^{(i)}\in\End(V^{\otimes d})$ in \eqref{def:ei}. Denote by $B_{d}[e]$ the subalgebra of $\End(V^{\otimes d})$ generated by $B_{d}\cup\{e^{(i)}|1\leq i \leq d\}$. The following is a generalization of Vust's Theorem \eqref{Vust:typeA} for the cases other than type $A$.

\begin{theorem} \label{mainresult}
Let $G=O(V)$ or $SP(V)$. If a nilpotent element $e\in\mathfrak{g}=\mbox{Lie}(G)$ satisfies that the nilpotent orbit closure $\overline{G\cdot e}$ is normal, then
\begin{equation}
\mbox{\em End}_{G_{e}}(V^{\otimes d})=B_{d}[e].
\end{equation}
\end{theorem}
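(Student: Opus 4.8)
The plan is to mimic the type $A$ argument of Kraft--Procesi (\cite{KP1}), reducing the computation of $\mbox{End}_{G_e}(V^{\otimes d})$ to a statement about $G$-equivariant morphisms $\mathfrak{g}\to\End(V^{\otimes d})$. Concretely, one has a linear isomorphism $\mbox{End}(V^{\otimes d})\cong\mbox{End}(V^{\otimes d})^{G_e}$-equivariant evaluation: since $G_e$ is reductive (as $\overline{G\cdot e}$ normal forces $e$ to lie in a ``nice'' orbit, and anyway $G_e$ for a nilpotent $e$ in a classical Lie algebra is a linear algebraic group whose reductive part we can use), every element of $\mbox{End}_{G_e}(V^{\otimes d})=[\End(V^{\otimes d})]^{G_e}$ arises, via Lemma~\ref{asle} together with the Remark (condition (2) is automatic for nilpotent $e$) and the hypothesis that $\overline{G\cdot e}$ is normal (condition (1)), as $\Psi(e)$ for some $G$-equivariant morphism $\Psi\in\mbox{Mor}_G(\mathfrak{g},\End(V^{\otimes d}))$. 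Thus the first step is: \textbf{every $G_e$-endomorphism of $V^{\otimes d}$ is of the form $\Psi(e)$ for some $\Psi\in\mbox{Mor}_G(\mathfrak{g},\End(V^{\otimes d}))$.}

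The second step is to invoke Proposition~\ref{kprop}: as an $R$-module, $\mbox{Mor}_G(\mathfrak{g},\End(V^{\otimes d}))$ is generated by the set $S$, whose members have the shape $X\mapsto (X^{l_1'}\otimes\cdots\otimes X^{l_d'})\circ b\circ(X^{l_1}\otimes\cdots\otimes X^{l_d})$ with $b\in B_d$. Hence every $\Psi$ is an $R$-linear combination $\Psi=\sum_k r_k\circ\Psi_k$ with $\Psi_k\in S$, and therefore
$$\Psi(e)=\sum_k r_k(e)\,(e^{l^{(k)}_1}\otimes\cdots\otimes e^{l^{(k)}_d})\circ b_k\circ(e^{m^{(k)}_1}\otimes\cdots\otimes e^{m^{(k)}_d}).$$
Each scalar $r_k(e)$ is just a complex number, and each factor $e^{l_i}$ in tensor position $i$ is a product of the generators $e^{(i)}$ (recall $e^{(i)}=1^{\otimes(i-1)}\otimes e\otimes 1^{\otimes(d-i)}$, so $(e^{(i)})^{l}=1^{\otimes(i-1)}\otimes e^{l}\otimes 1^{\otimes(d-i)}$, and these commute across distinct positions). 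Since $b_k\in B_d$ by construction, the whole expression lies in the subalgebra $B_d[e]$ generated by $B_d\cup\{e^{(i)}\}$. This gives the inclusion $\mbox{End}_{G_e}(V^{\otimes d})\subseteq B_d[e]$.

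For the reverse inclusion one checks that each generator of $B_d[e]$ actually commutes with the $G_e$-action on $V^{\otimes d}$. The elements $\sigma(w)$ for $w\in\mathfrak{S}_d$ commute with the diagonal $G$-action, hence with $G_e\subseteq G$; the Brauer generators $\gamma_{ij}$ commute with the full $G=\O(V)$ or $\SP(V)$ action by the classical fact recalled in the subsection on the Brauer algebra (they are built from the invariant form $\langle,\rangle$), hence in particular with $G_e$; and $e^{(i)}$ commutes with $G_e$ because for $g\in G_e$ we have $g\,e\,g^{-1}=e$ on $V$, so $g^{\otimes d}$ commutes with $1^{\otimes(i-1)}\otimes e\otimes 1^{\otimes(d-i)}$. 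Therefore $B_d[e]\subseteq\mbox{End}_{G_e}(V^{\otimes d})$, and combining the two inclusions proves the theorem.

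The step I expect to be the genuine obstacle is the \emph{first} one: guaranteeing that an arbitrary $G_e$-invariant endomorphism extends to a $G$-equivariant morphism of the whole $\mathfrak{g}$. This is exactly where the normality hypothesis on $\overline{G\cdot e}$ enters through Lemma~\ref{asle}, and one must also know that $G_e$ is reductive (or at least that the fixed-point functor is exact enough) for the lifting argument to go through cleanly; in type $A$ this is classical, and for $\O(V)$, $\SP(V)$ with $\overline{G\cdot e}$ normal it should follow from the structure theory of centralizers of nilpotents in classical Lie algebras. Everything after that---Proposition~\ref{kprop}, the technical Lemmas~\ref{change X to X'} and~\ref{gele1}, and the bookkeeping identifying the generators---is essentially formal, being a transcription of the general $G$-variety machinery of Kraft--Procesi to the present orthogonal/symplectic setting with the Brauer algebra $B_d$ playing the role that $\sigma(\mathfrak{S}_d)$ plays in type $A$.
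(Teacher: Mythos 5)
Your proposal is correct and follows essentially the same route as the paper: Lemma~\ref{asle} (with normality of $\overline{G\cdot e}$ and the automatic codimension condition) realizes every $G_e$-invariant endomorphism as $\Psi(e)$, Proposition~\ref{kprop} reduces $\Psi$ to an $R$-combination of elements of $S$ whose evaluations at $e$ visibly lie in $B_d[e]$, and the reverse inclusion is a direct check. The only superfluous point is your worry about reductivity of $G_e$: Lemma~\ref{asle} needs no such hypothesis, so that concern can be dropped.
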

\begin{proof}
For any $m\in\End(V^{\otimes d})^{G_{e}}$, by Lemma \ref{asle} we have a $G$-equivariant morphism $\Psi: \mathfrak{g}\rightarrow (\End(V^{\otimes d}))^{G_{e}}$ such that $\Phi(e)=m$. So
$$(\End(V^{\otimes d}))^{G_{e}}=\{\Psi(e)\mid \Psi\in\mbox{Mor}_{G}(\mathfrak{g},\End(V^{\otimes d}))\}.$$

Notice that for any $\Psi\in S$,
$$\Psi(e)=(e^{l_{1}'} \otimes \dots  \otimes e^{l_{d}'}) \circ b \circ (e^{l_{1}} \otimes \dots  \otimes e^{l_{d}})\in B_d[e].$$
Hence Proposition \ref{kprop} implies that
$$(\End(V^{\otimes d}))^{G_{e}}=\{\Psi(e)\mid \Psi\in\mbox{Mor}_{G}(\mathfrak{g},\End(V^{\otimes d}))\} \subset B_{d}[e].$$

On the other hand, it can be checked directly that
$$B_{d}[e]\subset(\End(V^{\otimes d}))^{G_{e}}.$$

So we finally obtain that
$$B_{d}[e]=(\End(V^{\otimes d}))^{G_{e}}=\End_{G_{e}}(V^{\otimes d}).$$
\end{proof}

\begin{remark}
A criteria on the normality of $\overline{G\cdot e}$ for any nilpotent element $e\in\mathfrak{g}$ can be found in \cite{KP2}.
\end{remark}

\subsection{Description of $G_e$}
It can be found in Section 3 of \cite{Jan2} that
$$G_{e}=C_{e}\rtimes R_{e}$$ where $C_{e}$ is the reductive part and $R_{e}$ is the unipotent radical.
Moreover, $R_{e}$ is connected (c.f. Proposition 3.12 in \cite{Jan2}).
Suppose that $e\in\mathfrak{g}$ corresponds to a partition $[1^{r_{1}}2^{r_{2}}\cdots ]$ of $\mbox{dim}(V)$ (by Jordan blocks), then we have an isomorphism of algebraic groups (c.f. \S 3.8 in \cite{Jan2})
$$\rho_{O(V)} :  \prod_{s\geq 1; s \ \mbox{odd}}O_{r_s} \times \prod_{s \geq 1; s \ \mbox{even}}SP_{r_{s}} \rightarrow  C_{e}, \quad \mbox{if} \ G=O(V)$$
while
$$\rho_{SP(V)} :   \prod_{s \geq 1; s \ \mbox{even}}O_{r_{s}} \times \prod_{s \geq 1; s \ \mbox{odd}}SP_{r_{s}} \rightarrow C_{e}  , \quad \mbox{if} \  G=SP(V).$$

We only describe the isomorphism $\rho_{O(V)}$. Choose  $v_1,v_2, \ldots v_r \in V$ such that $e^{d_i}v_i=0$ and $\{e^j\cdot v_{i} \mid 0\leq j \leq d_i-1,1\leq i \leq r\}$ forms a
basis of $V$. Here each number $d_i$ corresponds to the order of a Jordan block. Set
$$W_s=\sum_{i;d_i=s}\mathbb{C}v_{i}.$$
The orthogonal group $O_{r_{s}}$ is defined on $W_{s}$ by a non-degenerate symmetric bilinear form. For any $g \in O_{r_{s}}$, its image under $\rho_{O(V)}$ is given by
$$\rho_{O(V)}(g)(e^j\cdot v_{i})=\left\{
\begin{array}{ll}
e^j\cdot g v_{i}, & \mbox{if $d_i=s$};\\
e^j\cdot v_{i}, & \mbox{otherwise.}
\end{array}
\right.
$$
Therefore as an $\O_{r_s}$-module,
\begin{equation*}
V\simeq W_{s}^{\oplus s}\oplus W_{s}'
\end{equation*}
where $W_{s}$ is the standard $\O_{r_{s}}$-module and $\O_{r_s}$ acts on $W_{s}'$ trivially.

Furthermore, the above construction shows that $W_{s_1}\subset W_{s_2}'$ for any $s_1\neq s_2$.

\subsection{Vust's Theorem for $\mathfrak{so}(V)$ and $\mathfrak{sp}(V)$}
The following lemma comparing $[V^{\otimes k}]^{\SO(V)}$ and $[V^{\otimes k}]^{\O(V)}$ will be used in the proof of Theorem \ref{universal coro}.
\begin{lemma} \label{tensor invar of SO}
(1). If $\dim(V)$ is odd,  then we have $[V^{\otimes k}]^{\SO(V)}=[V^{\otimes k}]^{\O(V)}$ for all $k \in \mathbb{N}$.\\
(2). If $\dim(V)$ is even,  then we have $[V^{\otimes k}]^{\SO(V)}=[V^{\otimes k}]^{\O(V)}$ for all $k<\dim(V)$.
\end{lemma}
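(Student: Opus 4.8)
The plan is to deduce both statements from the classical first fundamental theorems of invariant theory for $\O(V)$ and $\SO(V)$. Since $\SO(V)\subseteq\O(V)$ we always have $[V^{\otimes k}]^{\O(V)}\subseteq[V^{\otimes k}]^{\SO(V)}$, so in each case only the reverse inclusion is at issue. The structural input is that $\O(V)/\SO(V)\cong\mathbb{Z}/2\mathbb{Z}$, and that by Weyl's theorem an $\SO(V)$-invariant tensor in the tensor algebra of $V$ differs from an $\O(V)$-invariant one --- the latter being a contraction tensor built only from the form $\langle,\rangle$ --- exactly by the possible presence of contractions against the determinant tensor $\varepsilon\in\Lambda^{\dim V}V$; under the nontrivial coset of $\O(V)/\SO(V)$ such an invariant gets multiplied by the sign $(-1)^{b}$, where $b$ is the number of copies of $\varepsilon$ involved.

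I would prove (2) first. Fix $k<\dim V$. Identifying $V\cong V^{*}$ via the form, an element of $[V^{\otimes k}]^{\SO(V)}$ is the same as a multilinear $\SO(V)$-invariant on $V^{\times k}$, and the first fundamental theorem for $\SO(V)$ expresses it as a linear combination of products of factors $\langle x_{i},x_{j}\rangle$ and $\dim V$-fold determinants $[x_{i_{1}},\dots,x_{i_{\dim V}}]$, with the arguments $x_{1},\dots,x_{k}$ distributed, each exactly once, among the factors. If some term contains $b\ge 1$ determinant factors it consumes at least $b\,\dim V\ge\dim V$ arguments, which is impossible for $k<\dim V$; hence $b=0$ throughout, so every $\SO(V)$-invariant in $V^{\otimes k}$ is already an $\O(V)$-invariant, giving $[V^{\otimes k}]^{\SO(V)}=[V^{\otimes k}]^{\O(V)}$.

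For (1) I would add the observation that when $\dim V$ is odd, $-\Id$ lies in $\O(V)\setminus\SO(V)$ (its determinant is $(-1)^{\dim V}=-1$), so $\O(V)=\SO(V)\times\{\pm\Id\}$ and $[V^{\otimes k}]^{\O(V)}$ is precisely the $(-\Id)$-fixed subspace of $[V^{\otimes k}]^{\SO(V)}$. Since $-\Id$ acts on $V^{\otimes k}$ by the scalar $(-1)^{k}$, this fixed subspace is all of $[V^{\otimes k}]^{\SO(V)}$ whenever $k$ is even, which already disposes of the even case for every $k$; for odd $k$ one instead shows $[V^{\otimes k}]^{\SO(V)}$ itself vanishes, using that in the contraction description $k=2a+b\,\dim V$ with $\dim V$ odd forces $b$ odd, hence $b\ge 1$, hence --- by the slot count just used in (2) --- $k\ge\dim V$.

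The step I would scrutinize most carefully is exactly this last one: the $-\Id$ argument settles (1) for free only when $k$ is even, while the odd-$k$ case genuinely rests on the vanishing of $[V^{\otimes k}]^{\SO(V)}$, which the $\varepsilon$-counting delivers when $k<\dim V$; so one must check that the parity and slot-counting bookkeeping is consistent with the precise range of $k$ being asserted. Everything else is an organization of the classical first fundamental theorems for $\O(V)$ and $\SO(V)$ (Weyl; see also Goodman--Wallach or Procesi), introducing no new idea.
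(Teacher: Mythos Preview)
Your treatment of (2) is correct and is a cleaner route than the paper's: you argue directly from the first fundamental theorem for $\SO(V)$, observing that a multilinear invariant in $k<\dim V$ variables cannot accommodate any determinant factor, so it is already an $\O(V)$-invariant. The paper instead splits on the parity of $k$: for $k$ odd it uses $-\Id\in\SO(V)$ (since $\dim V$ is even) to kill both sides, and for $k$ even it identifies $V^{\otimes k}\cong\End(V^{\otimes k/2})$ via the form and invokes Grood's theorem that $\End_{\SO(V)}(V^{\otimes k/2})=\End_{\O(V)}(V^{\otimes k/2})$ when $k/2<\dim V/2$. Your argument handles both parities uniformly and avoids the external citation; the paper's route has the minor advantage of reducing to a centralizer statement already set up in the surrounding text.

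Your unease about (1) is exactly right, and in fact the worry is fatal: statement (1) is \emph{false} as written. Take $n=\dim V$ odd and $k=n$; the totally antisymmetric tensor spanning $\Lambda^{n}V\subset V^{\otimes n}$ is $\SO(V)$-invariant (it transforms by $\det$), but $-\Id\in\O(V)\setminus\SO(V)$ sends it to its negative, so $[V^{\otimes n}]^{\SO(V)}\neq[V^{\otimes n}]^{\O(V)}$. Your $-\Id$ argument correctly proves (1) for even $k$, and your slot count correctly gives $[V^{\otimes k}]^{\SO(V)}=0$ for odd $k<\dim V$; but nothing can rescue the assertion for odd $k\ge\dim V$. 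The paper's one-line proof (``follows from $\O(V)=\SO(V)\cup(-1)\SO(V)$'') has the same gap: that decomposition yields $[V^{\otimes k}]^{\O(V)}=[V^{\otimes k}]^{\SO(V)}$ only when $(-1)^{k}=1$, and for odd $k$ says nothing beyond $[V^{\otimes k}]^{\O(V)}=0$. The correct version of (1) should carry the restriction $k<\dim V$ (or to even $k$), and that is precisely what you have established.
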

\begin{proof}
Statement(1) follows  from the fact $\O(V)=\SO(V)\cup (-1)\SO(V)$.

Suppose $\dim(V)=2r$ for some $r \in \mathbb{N}$.
If $k$ is odd, since $-\mbox{id}_V \in \SO(V)$ we have $[V^{\otimes k}]^{\SO(V)}=[V^{\otimes k}]^{\O(V)}=0$.
If $k$ is even, we identify $V^{\otimes k}$ with $\End(V^{\otimes k/2})$ similar to \eqref{isotheta}. Then Theorem 1.4 (2) in \cite{Gro} implies that $[\End(V^{\otimes k/2})]^{\SO(V)}=[\End(V^{\otimes k/2})]^{\O(V)}$.
Thus we have proved statement (2).
\end{proof}

Now we can obtain the Lie algebra version of Vust's Theorem for cases other than type $A$.
\begin{theorem} \label{universal coro}
Let $G=\O(V)$ or $\SP(V)$, and $e\in\mathfrak{g}=\Lie(G)$ be a nilpotent element with partition $[1^{r_{1}}2^{r_{2}}\cdots ]$ of $\dim(V)$ by Jordan blocks. Assume $e$ satisfies that
\begin{itemize}
\item[(1)] the nilpotent orbit closure $\overline{G\cdot e}$ is a normal variety;
\item[(2)] if $G=\O(V)$, either $r_{s}=\mbox{odd}$ or $r_{s}> 2d$ for all odd $s$; if $G=\SP(V)$, either $r_{s}=\mbox{odd}$ or $r_{s}> 2d$ for all even $s$.
\end{itemize}
Then we have
$$\End_{\textbf{U}(\mathfrak{g}_{e})}(V^{\otimes d})=B_{d}[e].$$
\end{theorem}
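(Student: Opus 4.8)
The strategy is to reduce the statement to Theorem \ref{mainresult} by showing that, under hypothesis (2), replacing $G_{e}$ by its identity component $G_{e}^{\circ}$ does not enlarge the commutant of the $G$-action on $V^{\otimes d}$. Since $V^{\otimes d}$ is a rational representation of the connected group $G_{e}^{\circ}$ (restricted from $G$), a linear endomorphism commutes with $G_{e}^{\circ}$ if and only if it commutes with $\mathfrak{g}_{e}=\Lie(G_{e}^{\circ})$, so
$$\End_{\textbf{U}(\mathfrak{g}_{e})}(V^{\otimes d})=\End_{G_{e}^{\circ}}(V^{\otimes d}).$$
From $G_{e}=C_{e}\rtimes R_{e}$ with $R_{e}$ connected we get $G_{e}^{\circ}=C_{e}^{\circ}\rtimes R_{e}$, and $B_{d}[e]\subseteq\End_{G_{e}^{\circ}}(V^{\otimes d})$ is checked exactly as in the proof of Theorem \ref{mainresult}, since $B_{d}$ commutes with the $\mathfrak{g}$-action and each $e^{(i)}$ commutes with the $\mathfrak{g}_{e}$-action. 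Granting hypothesis (1) and Theorem \ref{mainresult}, which give $\End_{G_{e}}(V^{\otimes d})=B_{d}[e]$, it remains only to prove the inclusion opposite to the obvious one, namely $\End(V^{\otimes d})^{G_{e}^{\circ}}\subseteq\End(V^{\otimes d})^{G_{e}}$.

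First I would pass to tensor invariants: the bijection $\theta$ of \eqref{isotheta} induces a $G$-equivariant isomorphism $V^{\otimes 2d}\cong\End(V^{\otimes d})$ (diagonal action on the source, conjugation on the target), so it is equivalent to show $[V^{\otimes 2d}]^{G_{e}^{\circ}}=[V^{\otimes 2d}]^{G_{e}}$. Because $C_{e}^{\circ}=\prod_{s}\SO_{r_{s}}\times\prod_{s}\SP_{r_{s}}$ (the symplectic factors being already connected), one has $G_{e}/G_{e}^{\circ}\cong C_{e}/C_{e}^{\circ}\cong\prod_{s}\O_{r_{s}}/\SO_{r_{s}}$, a product of copies of $\mathbb{Z}/2\mathbb{Z}$; hence $G_{e}$ is generated by $G_{e}^{\circ}$ together with finitely many elements $\delta_{s}\in\O_{r_{s}}\subseteq C_{e}$ lifting the nontrivial cosets. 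It therefore suffices to check that every $x\in[V^{\otimes 2d}]^{G_{e}^{\circ}}$ is fixed by each such $\delta_{s}$.

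Fix $s$ with $r_{s}\geq 1$. Since $\SO_{r_{s}}\subseteq C_{e}^{\circ}\subseteq G_{e}^{\circ}$ we have $[V^{\otimes 2d}]^{G_{e}^{\circ}}\subseteq[V^{\otimes 2d}]^{\SO_{r_{s}}}$, so it is enough to prove $[V^{\otimes 2d}]^{\SO_{r_{s}}}=[V^{\otimes 2d}]^{\O_{r_{s}}}$; then $x$ is $\O_{r_{s}}$-invariant, in particular $\delta_{s}$-fixed. For this, recall the decomposition $V\simeq W_{s}^{\oplus s}\oplus W_{s}'$ of $V$ as an $\O_{r_{s}}$-module, with $W_{s}$ the standard module, $\dim W_{s}=r_{s}$, and $W_{s}'$ trivial. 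Expanding the tensor power, $V^{\otimes 2d}$ is, as an $\O_{r_{s}}$-module, a direct sum of submodules isomorphic to $W_{s}^{\otimes a}$ with $0\leq a\leq 2d$. By hypothesis (2) either $r_{s}$ is odd, or $r_{s}>2d\geq a$; in either case Lemma \ref{tensor invar of SO} gives $[W_{s}^{\otimes a}]^{\SO_{r_{s}}}=[W_{s}^{\otimes a}]^{\O_{r_{s}}}$, and summing over the summands yields the claim. The argument for $G=\SP(V)$ is identical, with $s$ now running over the parity class for which an orthogonal factor $\O_{r_{s}}$ occurs.

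The crux is precisely this last comparison of $\O_{r_{s}}$- and $\SO_{r_{s}}$-invariants: it is the one place where hypothesis (2) is needed, through the dimension bound $a\leq 2d<r_{s}$ (or $r_s$ odd) that makes Lemma \ref{tensor invar of SO} applicable. Everything else — the equivariance of $\theta$, the expansion of $V^{\otimes 2d}$ into the pieces $W_{s}^{\otimes a}$, the fact that $G_{e}$ is generated by $G_{e}^{\circ}$ and the $\delta_{s}$, and the reduction from $\textbf{U}(\mathfrak{g}_{e})$ to the connected group $G_{e}^{\circ}$ — is routine bookkeeping built on the structure of $C_{e}$ recalled above.
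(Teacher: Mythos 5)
Your proposal is correct and follows essentially the same route as the paper: reduce from $\textbf{U}(\mathfrak{g}_{e})$ to $G_{e}^{\circ}$, transport the problem to $[V^{\otimes 2d}]$ via $\theta^{\otimes d}$, decompose $V^{\otimes 2d}$ over each orthogonal factor $\O_{r_{s}}$ into pieces $W_{s}^{\otimes k}\otimes W_{s}'^{\otimes(2d-k)}$, and invoke Lemma \ref{tensor invar of SO} together with hypothesis (2) to identify $\SO_{r_{s}}$- and $\O_{r_{s}}$-invariants before applying Theorem \ref{mainresult}. Your bookkeeping with the component group $G_{e}/G_{e}^{\circ}$ and the lifts $\delta_{s}$ is just a slightly more explicit phrasing of the paper's "using the claim repeatedly" step.
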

\begin{proof}
Here we will only prove the theorem for $G=\O(V)$ since a similar argument works for $G=\SP(V)$. Denote by $G_{e}^{\circ}$ the connected component of $G_{e}$ containing $\mbox{id}_V$.
By the relation between representation of connected algebraic group and its Lie algebra, we need to show $\End_{G_{e}^{\circ}}(V^{\otimes d})=B_{d}[e]$.

Set
\begin{eqnarray*}
\O_e&:=&\O_{r_{1}}\times \O_{r_{3}} \times \O_{r_{5}}\times\cdots ,\\
\SO_e&:=&\SO_{r_{1}}\times \SO_{r_{3}} \times \SO_{r_{5}}\times\cdots, \\
\SP_e&:=&\SO_{r_{2}}\times \SO_{r_{4}} \times \SO_{r_{6}}\times\cdots.
\end{eqnarray*}
Thus
$$G_{e}=O_{e}\rtimes (\SP_e \rtimes R_e)\quad\mbox{and}\quad G_{e}^{\circ}=\SO_{e}\rtimes (\SP_e \rtimes R_e).$$

We claim that
$$\End_{\O_{r_{s}}}(V^{\otimes d})=\End_{\SO_{r_{s}}}(V^{\otimes d})\quad \mbox{for all even $s$}.$$

Indeed we have
\begin{align*}
\End_{\O_{r_{s}}}(V^{\otimes d})
&=[\End(V^{\otimes d})]^{\O_{r_{s}}}\\
&\simeq[V^{\otimes 2d}]^{\O_{r_{s}}}
\quad\quad\quad(\mbox{by bijection $\theta^{\otimes d}: V^{\otimes 2d}\rightarrow \End(V^{\otimes d})$ similar to \eqref{isotheta}}) \\
&=[\bigoplus_{k=0}^{2d}(W_{s}^{\otimes k}\otimes W_{s}'^{\otimes (2d-k)})^{\oplus c_{k}}]^{\O(r_{s})} \\
&=\bigoplus_{k=0}^{2d}([W_{s}^{\otimes k}\otimes W_{s}'^{\otimes (2d-k)}]^{\O_{r_{s}}})^{\oplus c_{k}} \\
&=\bigoplus_{k=0}^{2d}([W_{s}^{\otimes k}]^{\O_{r_{s}}}\otimes W_{s}'^{\otimes (2d-k)})^{\oplus c_{k}}
\end{align*}
where $c_{k}=s^k{2d\choose k}$.
By the same procedure we have
$$\End_{\SO_{r_{s}}}(V^{\otimes d})\simeq\bigoplus_{k=0}^{2d}([W_{s}^{\otimes k}]^{\SO_{r_{s}}}\otimes W_{s}'^{\otimes (2d-k)})^{\oplus c_{k}}.$$
Therefore, the claim follows from Lemma \ref{tensor invar of SO}.

Using the above claim repeatedly, we get that
$$\End_{\O_{e}}(V^{\otimes d})=\End_{\SO_{e}}(V^{\otimes d}),$$
and hence
$$\End_{\O_{e}\rtimes (SP_e \rtimes R_e)}(V^{\otimes d})=\End_{\SO_{e}\rtimes (\SP_e \rtimes R_e)}(V^{\otimes d}).$$

Thus we obtain $\End_{G_{e}^{\circ}}(V^{\otimes d})=\End_{G_{e}}(V^{\otimes d})=B_{d}[e]$.
\end{proof}

\subsection{Double centralizer property}
Denote by $\phi$ the action of $\textbf{U}(\mathfrak{g})$ on $V^{\otimes d}$.
Though we do not give a double centralizer property for $\textbf{U}(\mathfrak{g}_{e})$ and $B_d[e]$, instead we have the following proposition.
\begin{prop}
Let $\mathfrak{g}=\mathfrak{sp}(V)$ or $\mathfrak{so}(V)$ be a simple Lie algebra of type $B$ or $C$. If the nilpotent element $e\in \mathfrak{g}$ satisfies the assumption in Theorem \ref{universal coro}. Then the following double centralizer property holds:
\begin{eqnarray}
\End_{\phi(\textbf{U}(\mathfrak{gl}(V)_{e}))\cap\phi(\textbf{U}(\mathfrak{g}))}(V^{\otimes d})&=&B_{d}[e],\label{eq:1}\\
\phi(\textbf{U}(\mathfrak{gl}(V)_{e})) \cap \phi(\textbf{U}(\mathfrak{g}))&=&\End_{B_{d}[e]}(V^{\otimes d})\label{eq:2}.
\end{eqnarray}
\end{prop}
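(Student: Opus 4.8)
The plan is to deduce the double centralizer property from the Lie-algebra version of Vust's theorem (Theorem \ref{universal coro}) together with the classical Schur--Weyl--Brauer duality for $\mathfrak{g}=\mathfrak{sp}(V)$ or $\mathfrak{so}(V)$. The starting point is the identity $\End_{\textbf{U}(\mathfrak{g}_e)}(V^{\otimes d})=B_d[e]$ provided by Theorem \ref{universal coro}; note that $\mathfrak{g}_e=\mathfrak{gl}(V)_e\cap\mathfrak{g}$ acts on $V^{\otimes d}$ as $\phi(\textbf{U}(\mathfrak{gl}(V)_e))\cap\phi(\textbf{U}(\mathfrak{g}))$, once one checks that the intersection of the two enveloping-algebra images coincides with the image of $\textbf{U}(\mathfrak{g}_e)$ — this is where the hypothesis that $\mathfrak{g}$ is simple of type $B$ or $C$ (rather than $D$) enters, to avoid the subtleties of $\O(V)$ versus $\SO(V)$ on tensor space. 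Granting this, \eqref{eq:1} is precisely the content of Theorem \ref{universal coro}, so the real work is \eqref{eq:2}.

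For \eqref{eq:2}, first I would establish the ``easy'' inclusion $\phi(\textbf{U}(\mathfrak{gl}(V)_e))\cap\phi(\textbf{U}(\mathfrak{g}))\subseteq\End_{B_d[e]}(V^{\otimes d})$: every element of $\textbf{U}(\mathfrak{g}_e)$ commutes with $\sigma(\mathfrak{S}_d)$ and with the Brauer generators $\gamma_{ij}$ (since $\mathfrak{g}_e\subset\mathfrak{g}$ preserves the form), and it commutes with each $e^{(i)}$ because $[X,e]=0$ for $X\in\mathfrak{g}_e$; hence $\textbf{U}(\mathfrak{g}_e)$ centralizes the whole generating set of $B_d[e]$. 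For the reverse inclusion I would invoke the double centralizer theorem: since $B_d[e]=\End_{\textbf{U}(\mathfrak{g}_e)}(V^{\otimes d})$ by \eqref{eq:1}, and $V^{\otimes d}$ is a semisimple module over the image $\phi(\textbf{U}(\mathfrak{g}_e))$ (because $\mathfrak{g}_e\subseteq\mathfrak{g}$ is an algebraic Lie algebra and one can, e.g., reduce to the reductive part $C_e$ acting together with the nilpotent $e$, or simply note that $V^{\otimes d}$ is already semisimple over the larger algebra $B_d[e]$ hence over its centralizer), the classical double centralizer theorem for a semisimple module gives $\phi(\textbf{U}(\mathfrak{g}_e))=\End_{B_d[e]}(V^{\otimes d})$. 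Combining with $\phi(\textbf{U}(\mathfrak{g}_e))=\phi(\textbf{U}(\mathfrak{gl}(V)_e))\cap\phi(\textbf{U}(\mathfrak{g}))$ yields \eqref{eq:2}.

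The main obstacle I anticipate is justifying the identity $\phi(\textbf{U}(\mathfrak{gl}(V)_e))\cap\phi(\textbf{U}(\mathfrak{g}))=\phi(\textbf{U}(\mathfrak{g}_e))$ on $V^{\otimes d}$. The inclusion $\supseteq$ is immediate from $\mathfrak{g}_e=\mathfrak{gl}(V)_e\cap\mathfrak{g}$, but the reverse requires knowing that an operator on $V^{\otimes d}$ which is simultaneously in the image of $\textbf{U}(\mathfrak{gl}(V)_e)$ and in the image of $\textbf{U}(\mathfrak{g})$ must already come from $\textbf{U}(\mathfrak{g}_e)$; this should follow by comparing centralizers — $\End_{\mathfrak{gl}(V)_e}(V^{\otimes d})=\mathfrak{S}_d[e]$ by Vust's original theorem \eqref{Vust:typeA} and $\End_{\mathfrak{g}}(V^{\otimes d})=B_d$ by Brauer duality, so one passes to centralizers of the span $\mathfrak{S}_d[e]$ of the relevant operators and identifies it with $B_d[e]$ — but making the double-centralizer bookkeeping precise in the non-semisimple setting of $\mathfrak{g}_e$ (whose universal enveloping image need not be its own bicommutant a priori) is the delicate point, and is precisely why the statement is restricted to types $B$ and $C$ where Lemma \ref{tensor invar of SO} removes the $\O$-versus-$\SO$ discrepancy. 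I would handle this by working throughout with the connected group $G_e^{\circ}$, whose rational representations integrate $\mathfrak{g}_e$, and transporting the group-level double centralizer property of Theorem \ref{mainresult}/\ref{universal coro} to the Lie-algebra level.
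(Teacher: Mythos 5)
Your proposal contains two genuine gaps, and the second one is fatal to the route you chose for \eqref{eq:2}.

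First, your argument leans on the identity $\phi(\textbf{U}(\mathfrak{gl}(V)_{e}))\cap\phi(\textbf{U}(\mathfrak{g}))=\phi(\textbf{U}(\mathfrak{g}_{e}))$. The paper explicitly records (in the remark immediately following this proposition) that this is an open question which the authors \emph{cannot} answer in general; they only verify it for $d=2$ and $\mathrm{rank}(\mathfrak{g})\leq 3$. Indeed, the reason the proposition is phrased with the intersection $\phi(\textbf{U}(\mathfrak{gl}(V)_{e}))\cap\phi(\textbf{U}(\mathfrak{g}))$ rather than with $\phi(\textbf{U}(\mathfrak{g}_{e}))$ is precisely to avoid this identification. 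For \eqref{eq:1} this is repairable: you only need the trivial inclusion $\phi(\textbf{U}(\mathfrak{gl}(V)_{e}))\cap\phi(\textbf{U}(\mathfrak{g}))\supseteq\phi(\textbf{U}(\mathfrak{g}_{e}))$, which gives $\End_{\phi(\textbf{U}(\mathfrak{gl}(V)_{e}))\cap\phi(\textbf{U}(\mathfrak{g}))}(V^{\otimes d})\subseteq\End_{\textbf{U}(\mathfrak{g}_{e})}(V^{\otimes d})=B_{d}[e]$ by Theorem \ref{universal coro}, and the reverse inclusion is the commuting-actions observation you already made. But for \eqref{eq:2} you cannot replace the intersection by $\phi(\textbf{U}(\mathfrak{g}_{e}))$ without proving the open identity.

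Second, your appeal to the classical double centralizer theorem fails because $V^{\otimes d}$ is \emph{not} a semisimple module over $\phi(\textbf{U}(\mathfrak{g}_{e}))$: the nilpotent element $e$ itself lies in $\mathfrak{g}_{e}$ and acts non-semisimply on $V$ (this is the whole point of the nonsemisimple setting). Your fallback, that semisimplicity over the larger algebra $B_{d}[e]$ would descend to the centralizer, is also unsound: $B_{d}[e]$ contains the nilpotent operators $e^{(i)}$ and is not a semisimple algebra, and in any case semisimplicity of a module over an algebra does not imply semisimplicity over a subalgebra of its centralizer. The paper's proof of \eqref{eq:2} avoids all of this by a sandwich argument: the easy inclusion $\phi(\textbf{U}(\mathfrak{gl}(V)_{e}))\cap\phi(\textbf{U}(\mathfrak{g}))\subseteq\End_{B_{d}[e]}(V^{\otimes d})$, and then, since $B_{d}[e]\supset\mathfrak{S}_{d}[e]$ and $B_{d}[e]\supset B_{d}$, the chain $\End_{B_{d}[e]}(V^{\otimes d})\subseteq\End_{\mathfrak{S}_{d}[e]}(V^{\otimes d})\cap\End_{B_{d}}(V^{\otimes d})=\phi(\textbf{U}(\mathfrak{gl}(V)_{e}))\cap\phi(\textbf{U}(\mathfrak{g}))$, where the first equality is Brundan--Kleshchev's type-$A$ higher Schur--Weyl duality (Theorem 2.4 of \cite{BK2}, itself proved by a filtered-deformation argument, not by naive bicommutation) and the second is Brauer duality $\End_{B_{d}}(V^{\otimes d})=\phi(\textbf{U}(\mathfrak{g}))$ — which is also where the restriction to types $B$ and $C$ actually enters, since for type $D$ the Brauer algebra centralizes $\O(V)$ rather than $\SO(V)$.
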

\begin{proof}

Firstly, it is clear that actions of $B_{d}[e]$ and $\phi(\textbf{U}(\mathfrak{gl}(V)_{e})) \cap \phi(\textbf{U}(\mathfrak{g}))$ commute with each other. Thus Equation \eqref{eq:1} follows from Theorem \ref{universal coro} and the fact that $\phi(\textbf{U}(\mathfrak{gl}(V)_{e}))\cap \phi(\textbf{U}(\mathfrak{g}))\supseteq\phi(\textbf{U}(\mathfrak{g}_{e}))$.

The following duality can be found in Theorem 2.4 in \cite{BK2}:
\begin{eqnarray*}
\End_{\mathbf{U}(\mathfrak{gl}(V)_e)}(V^{\otimes d})&=&\mathfrak{S}_{d}[e];\\
\phi(\mathbf{U}(\mathfrak{gl}(V)_e))&=&\End_{\mathfrak{S}_{d}[e]}(V^{\otimes d}).
\end{eqnarray*}
Note that $B_{d}[e]\supset \mathfrak{S}_{d}[e]$ and $B_{d}[e]\supset B_d$. Thus
$$\phi(\textbf{U}(\mathfrak{gl}(V)_{e})) \cap \phi(\textbf{U}(\mathfrak{g}))\subset\End_{B_{d}[e]}(V^{\otimes d})\subset\End_{\mathfrak{S}_{d}[e]}(V^{\otimes d})=\phi(\mathbf{U}(\mathfrak{gl}(V)_e))$$
and
$$\phi(\textbf{U}(\mathfrak{gl}(V)_{e})) \cap \phi(\textbf{U}(\mathfrak{g}))\subset\End_{B_{d}[e]}(V^{\otimes d})\subset\End_{B_d}(V^{\otimes d})=\phi(\mathbf{U}(\mathfrak{g})).$$
Therefore there comes Equation \eqref{eq:2}.
\end{proof}

\begin{remark}
It is natural to ask whether
\begin{equation*}
\phi_{d}(\textbf{U}(\mathfrak{gl}_{e})) \cap \phi_{d}(\textbf{U}(\mathfrak{g}))=\phi_{d}(\textbf{U}(\mathfrak{g}_{e})).
\end{equation*}
Though we can not answer this question in general, a direct calculation shows that the above equality holds when $d=2$ and $\mbox{rank}(\mathfrak{g})\leq 3$.
\end{remark}


\section{Centralizer of W-algebra action on $V^{\otimes d}$ } \label{w-algebra}
In this section, take $\mathfrak{g}=\mathfrak{so}_{2r}, \mathfrak{so}_{2r+1}$ or $\mathfrak{sp}_{2r}$. For convenience, entries of matrices in $\mathfrak{g}$ are indexed by $I\times I$ where
$$I=\begin{cases}
\{-r,\ldots,-1,0,1,\ldots,r\}\quad\mbox{if $\mathfrak{g}=\mathfrak{so}_{2r+1}$};\\
\{-r,\ldots,-1,1,\ldots,r\}\quad\mbox{if $\mathfrak{g}=\mathfrak{so}_{2r}$ or $\mathfrak{sp}_{2r}$}.
\end{cases}$$

\subsection{Gradings}\label{presec3}
Assume that $\Gamma:\mathfrak{g}=\bigoplus_{i\in \mathbb{Z}} \mathfrak{g}(i)$ is a $\mathbb{Z}$-grading of $\mathfrak{g}$.
We say $\Gamma$ is \textit{good} for nilpotent $e \in \mathfrak{g}$ if it satisfies that
\begin{itemize}
\item[(1)] $e \in \mathfrak{g}_{2}$;
\item[(2)]$\mbox{ad}_e:\mathfrak{g}_{j}\rightarrow \mathfrak{g}_{j+2}$ is injective for $j \leq -1$; and
\item[(3)]$\mbox{ad}_e:\mathfrak{g}_{j}\rightarrow \mathfrak{g}_{j+2}$ is surjective for $j \geq -1$.
\end{itemize}
We call $\Gamma$ is \textit{even} if $\mathfrak{g}_{j}=0$ for all odd $j$.

Refer to the literature \cite{EK} for classification of nilpotent elements which admit even good gradings for classical Lie algebras.
In this section we always assume that $e$ admits an even good grading.
Moreover, an even good grading $\Gamma$ induces a grading for $\textbf{U}(\mathfrak{g})$, which is called a \emph{loop grading}.

For any $\mathbb{Z}$-grading $\Gamma$, there exists a semisimple element $h_{\Gamma} \in \mathfrak{g}$ such that $\Gamma$ coincides with the eigenspace decomposition of $\mbox{ad}_{h_{\Gamma}}$ (c.f. \cite{Wa}), i.e. $$\mathfrak{g}_j=\{x\in\mathfrak{g}|[h_{\Gamma},x]=jx\}.$$
Let $\mathfrak{h}$ be a Cartan subalgebra of $\mathfrak{g}$ containing $h_\Gamma$.

\vspace{0.3cm}
\noindent{\bf Convention:}
\emph{Without loss of generality, we assume that $h_{\Gamma}$ is diagonal (by a conjugate transformation if necessary), and hence we take $\mathfrak{h}$ to be the standard Cartan subalgebra consisting of all diagonal matrices in $\mathfrak{g}$.}
\vspace{0.3cm}

Write
$F_{i,j}:=E_{i,j}-\theta_{i,j}E_{-j,-i} (i,j\in I)$ with
$$\theta_{i,j}=\left\{\begin{array}{cl}
1, & \mbox{if $\mathfrak{g}=\mathfrak{so}_{2r+1}$ or $\mathfrak{so}_{2r}$};\\
\mbox{sgn}(i)\mbox{sgn}(j), & \mbox{if $\mathfrak{g}=\mathfrak{sp}_{2r}$}.\\
\end{array}\right.
$$
The following set
$$\mathbb{B}= \begin{cases}
\{F_{i,i} \mid 0<i\leq r\}\cup\{F_{\pm i,\pm j} \mid 0<i<j\leq r\}\cup\{F_{0,\pm i}\mid 0<i\leq r\}, \  \mbox{if $\mathfrak{g}=\mathfrak{so}_{2r+1}$};\\
\{F_{\pm i,\pm j} \mid 0<i<j\leq r\}\cup\{F_{i,i},F_{-i,i},F_{i,-i} \mid 0 < i\leq r\},\  \mbox{if $\mathfrak{g}=\mathfrak{sp}_{2r}$};\\
\{F_{\pm i,\pm j} \mid 0<i<j\leq r\}\cup\{F_{i,i}\mid 0<i\leq r\},\ \mbox{if $\mathfrak{g}=\mathfrak{so}_{2r}$},
\end{cases}$$ forms a basis of $\mathfrak{g}$.
The subset $\{F_{i,i}=E_{i,i}-E_{-i,-i}|0<i\leq r\}\subset\mathbb{B}$ forms a basis of $\mathfrak{h}$.

Define a map
$$\mbox{col}: I\rightarrow\mathbb{Z},\ i\mapsto \mbox{col}(i)\quad \mbox{such that}\quad h_{\Gamma}\cdot v_{i}=\mbox{col}(i)v_{i}.$$

Equip $V$ a $\mathbb{Z}$-grading by $\mbox{gr}(v_{i}):=\mbox{col}(i)$. It is easy to check that $V$ is a graded $\mathfrak{g}$-module under this grading.

The set $$\{v_{i_{1}} \otimes \dots \otimes v_{i_{d}}\mid (i_{1},\dots ,i_{d})\in I^d\}$$
forms a homogeneous basis of graded $\mathfrak{g}$-module $V^{\otimes d}$ with
\begin{equation}\label{gradingofvd}
\mbox{gr}(v_{i_{1}} \otimes \dots \otimes v_{i_{d}})=\sum_{k=1}^{d}\mbox{col}(i_{k}).
\end{equation}

Set $\mathfrak{p}=\bigoplus_{i\geq 0}\mathfrak{g}(i)$ and $\mathfrak{m}=\bigoplus_{i<0}\mathfrak{g}(i)$.
The map $\mbox{col}$ satisfies the following proposition.
\begin{prop} \label{parabolic de prop }
\begin{itemize}
\item [(1)] $\mbox{col}(i)+\mbox{col}(-i)=0$, $(\forall 1\leq i \leq r)$;
\item [(2)] $F_{i,j}\in \mathfrak{p} \Leftrightarrow \mbox{col}(j)\leq \mbox{col}(i)$, $(\forall 1\leq i,j\leq r)$;
\item [(3)] $F_{i,j}\in \mathfrak{m} \Leftrightarrow \mbox{col}(j)>\mbox{col}(i)$, $(\forall 1\leq i,j\leq r)$.
\end{itemize}
\end{prop}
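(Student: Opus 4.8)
\textbf{Proof proposal for Proposition \ref{parabolic de prop }.}

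The plan is to reduce everything to the single linear-algebra fact that $h_\Gamma$ is a diagonal element of $\mathfrak{g}$ together with the explicit description of $\mathfrak{p}$ and $\mathfrak{m}$ as sums of $\mathrm{ad}_{h_\Gamma}$-eigenspaces. First I would record that, since $h_\Gamma\in\mathfrak{h}$ acts diagonally with $h_\Gamma\cdot v_i=\mathrm{col}(i)v_i$, the matrix unit $E_{i,j}$ is an $\mathrm{ad}_{h_\Gamma}$-eigenvector of eigenvalue $\mathrm{col}(i)-\mathrm{col}(j)$; consequently so is the basis element $F_{i,j}=E_{i,j}-\theta_{i,j}E_{-j,-i}\in\mathbb{B}$, provided the two matrix units occurring in it have the \emph{same} eigenvalue. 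This forces the compatibility $\mathrm{col}(i)-\mathrm{col}(j)=\mathrm{col}(-j)-\mathrm{col}(-i)$, which rearranges to $\bigl(\mathrm{col}(i)+\mathrm{col}(-i)\bigr)=\bigl(\mathrm{col}(j)+\mathrm{col}(-j)\bigr)$: the quantity $\mathrm{col}(i)+\mathrm{col}(-i)$ is independent of $i$. Evaluating on a convenient index (or using that $h_\Gamma\in\mathfrak{g}$ satisfies $h_\Gamma^\iota=-h_\Gamma$, equivalently that the bilinear form pairs $v_i$ with $v_{-i}$ up to scalar and is $h_\Gamma$-invariant, so $\mathrm{col}(i)+\mathrm{col}(-i)=0$ directly) gives part (1).

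For parts (2) and (3), the key point is that $F_{i,j}$ lies in the $\mathrm{ad}_{h_\Gamma}$-eigenspace $\mathfrak{g}\bigl(\mathrm{col}(i)-\mathrm{col}(j)\bigr)$. By definition $\mathfrak{p}=\bigoplus_{k\geq 0}\mathfrak{g}(k)$, so $F_{i,j}\in\mathfrak{p}$ if and only if $\mathrm{col}(i)-\mathrm{col}(j)\geq 0$, i.e. $\mathrm{col}(j)\leq\mathrm{col}(i)$; this is (2). Likewise $\mathfrak{m}=\bigoplus_{k<0}\mathfrak{g}(k)$, so $F_{i,j}\in\mathfrak{m}$ if and only if $\mathrm{col}(i)-\mathrm{col}(j)<0$, i.e. $\mathrm{col}(j)>\mathrm{col}(i)$, giving (3). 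I would remark that the restriction $1\leq i,j\leq r$ is exactly what guarantees $F_{i,j}$ is a genuine basis element of the form to which this eigenvalue computation applies, so no separate bookkeeping for the $\pm$ cases or the index $0$ is needed in the statement as written.

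The only step requiring a little care — and the one I expect to be the mild obstacle — is making the "same eigenvalue" assertion for the two matrix units inside $F_{i,j}$ rigorous without circularity: one wants part (1) to conclude that $E_{i,j}$ and $E_{-j,-i}$ share the eigenvalue $\mathrm{col}(i)-\mathrm{col}(j)=\mathrm{col}(-j)-\mathrm{col}(-i)$, yet part (1) is itself being proved. The clean way around this is to establish $\mathrm{col}(i)+\mathrm{col}(-i)=0$ first, independently, from the fact that $h_\Gamma\in\mathfrak{g}$ and hence $h_\Gamma^\iota=-h_\Gamma$ by \eqref{iotalie}: pairing $v_i$ against its dual forces the $h_\Gamma$-weights of $v_i$ and $v_{-i}$ to be negatives of each other. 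With (1) in hand, the eigenvalue bookkeeping for (2) and (3) is then entirely routine. I would present it in that order.
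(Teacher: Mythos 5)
Your proposal is correct and follows essentially the same route as the paper: the paper simply writes $h_\Gamma=\sum_{1\leq i\leq r}a_i(E_{i,i}-E_{-i,-i})$ (which is exactly the antisymmetry $\mathrm{col}(-i)=-\mathrm{col}(i)$ you derive from $h_\Gamma^\iota=-h_\Gamma$), reads off part (1), and then concludes (2) and (3) from $\mathrm{gr}(F_{i,j})=\mathrm{col}(i)-\mathrm{col}(j)$ together with the definitions of $\mathfrak{p}$ and $\mathfrak{m}$ as the nonnegative and negative eigenspaces. Your extra care about establishing (1) before asserting that $E_{i,j}$ and $E_{-j,-i}$ share an eigenvalue is a sound (if slightly more verbose) way of organizing the same computation.
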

\begin{proof}
Assume $h_{\Gamma}=\sum_{1\leq i \leq r}a_{i}(E_{i,i}-E_{-i,-i})$. It is clear that $$\mbox{col}(i)=a_i,\quad \mbox{col}(-i)=-a_i \quad (\forall 1\leq i\leq r)$$
and $$\mbox{gr}(F_{i,j})=\mbox{col}(i)-\mbox{col}(j).$$
So the proposition follows.
\end{proof}

\subsection{W-algebra $\mathcal{W}_{\chi}$}
There are several equivalent definitions for W-algebras. Here we adapt the following definition for those nilpotent element $e\in\mathfrak{g}$ admitting an even good grading.

Let $\chi\in\mathfrak{g}^{*}$ be the linear function on $\mathfrak{g}$ uniquely determined by
$$\chi(g)=\mbox{Trace}(\mbox{ad}_e\circ \mbox{ad}_X), \quad(\forall X\in\mathfrak{g}).$$
Let $I_{\chi}$ be the left ideal of $\textbf{U}(\mathfrak{m})$ generated by $a-\chi(a)$ for all $a\in\mathfrak{m}$.

The \textit{W-algebra associated to $e$} is defined as
$$\mathcal{W}_{\chi}:=\{ y \in \textbf{U}(\mathfrak{p}) \mid [a,y] \in I_{\chi}, \forall a \in \mathfrak{m}\}.$$
The W-algebra $\mathcal{W}_{\chi}$ is a filtration subalgebra of graded algebra $\textbf{U}(\mathfrak{g})$ (with loop grading). By restriction, $V^{\otimes d}$ has a $\mathcal{W}_{\chi}$-module structure which is compatible with the above filtration of $\mathcal{W}_{\chi}$.

It is clear by the definition of good grading that $\mathfrak{g}_e\subset\mathfrak{p}$. So there is an embedding $\textbf{U}(\mathfrak{g}_e)\hookrightarrow \textbf{U}(\mathfrak{p})$.

\begin{theorem}[c.f. Theorem 3.8 in \cite{BGK}]\label{pr's loop grading lemma}
The embedding $\textbf{U}(\mathfrak{g}_e)\hookrightarrow \textbf{U}(\mathfrak{p})$ induces a graded algebra isomorphism
$$\textbf{U}(\mathfrak{g}_e)\simeq \mbox{gr}(\mathcal{W}_\chi).$$
\end{theorem}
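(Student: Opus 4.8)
\textbf{Proof strategy for Theorem \ref{pr's loop grading lemma}.}
The plan is to use the good (even) grading to set up a filtration on $\mathcal{W}_\chi$ by $\mathbf{U}(\mathfrak{g})$-degree and compare the associated graded object with $\mathbf{U}(\mathfrak{g}_e)$. First I would recall the PBW-type description: $\mathbf{U}(\mathfrak{g})\simeq\mathbf{U}(\mathfrak{m})\otimes\mathbf{U}(\mathfrak{p})$ as a vector space, so that the ``generalized Whittaker'' module $Q_\chi:=\mathbf{U}(\mathfrak{g})\otimes_{\mathbf{U}(\mathfrak{m})}\mathbb{C}_\chi$ is free over $\mathbf{U}(\mathfrak{p})$ of rank one, and $\mathcal{W}_\chi$ is canonically identified with $\{\bar y\in\mathbf{U}(\mathfrak{p})\mid (a-\chi(a))y\in I_\chi,\ \forall a\in\mathfrak{m}\}$, the space of $\mathfrak{m}$-invariants (adjointly twisted by $\chi$) in $Q_\chi$. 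The loop grading on $\mathbf{U}(\mathfrak{g})$ induced by the good grading $\Gamma$ restricts to the Kazhdan filtration: one declares $x\in\mathfrak{g}(j)$ to have degree $j+2$, so that $\chi$ becomes homogeneous and $I_\chi$ is a graded ideal up to the shift. This makes $\mathcal{W}_\chi$ a filtered subalgebra of $\mathbf{U}(\mathfrak{g})$ in the sense already asserted in the excerpt, and one gets an injective graded algebra map $\mathrm{gr}(\mathcal{W}_\chi)\hookrightarrow \mathrm{gr}(\mathbf{U}(\mathfrak{g}))=\mathbf{U}(\mathfrak{g})$ (the last identification because the loop grading on $\mathbf{U}(\mathfrak{g})$ is honest, not just a filtration).

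The core of the argument is then a dimension/Hilbert-series count in each graded piece. On the one hand, $\mathbf{U}(\mathfrak{g}_e)$ sits inside $\mathbf{U}(\mathfrak{p})$ since $\mathfrak{g}_e\subset\mathfrak{p}=\bigoplus_{i\geq0}\mathfrak{g}(i)$ by property (3) of a good grading ($\mathrm{ad}_e$ surjective on degrees $\geq-1$ forces $\mathfrak{g}_e$ into degrees $\geq0$). I would show directly that $\mathbf{U}(\mathfrak{g}_e)$ maps into $\mathrm{gr}(\mathcal{W}_\chi)$: given $u\in\mathbf{U}(\mathfrak{g}_e)$ of Kazhdan degree $k$, one must produce $y\in\mathcal{W}_\chi\cap F_k\mathbf{U}(\mathfrak{g})$ with principal symbol $u$. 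The natural candidate is built by the standard Whittaker-averaging/transvection procedure: choose a graded complement and correct $u$ by lower-Kazhdan-degree terms in $\mathbf{U}(\mathfrak{p})$ so that $[a,y]\in I_\chi$ for all $a\in\mathfrak{m}$; the obstruction to solving this recursively lives in $\mathfrak{g}/(\mathfrak{m}+[\mathfrak{m},\mathfrak{g}])$-type cohomology, which vanishes precisely because $\mathrm{ad}_e:\mathfrak{g}_j\to\mathfrak{g}_{j+2}$ is surjective for $j\geq-1$ and injective for $j\leq-1$ — i.e. the two conditions in the good grading are exactly what make this recursion terminate and be unconstrained. Conversely, one shows $\mathrm{gr}(\mathcal{W}_\chi)$ cannot be any bigger: the Kazhdan-graded dimension of $\mathcal{W}_\chi$ in each degree is bounded above by that of $\mathbf{U}(\mathfrak{g}_e)$, by comparing with the linear problem $[a,\,\cdot\,]\equiv 0$ on symbols, whose solution space is the $\mathfrak{m}$-coinvariants of $S(\mathfrak{p})$, isomorphic as a graded space to $S(\mathfrak{g}_e)$ via the Slodowy-slice transversality $\mathfrak{g}=\mathfrak{g}_e\oplus[\mathfrak{g},e]$ (again a restatement of the good-grading axioms). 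Putting the inclusion and the dimension bound together gives the graded isomorphism $\mathbf{U}(\mathfrak{g}_e)\simeq\mathrm{gr}(\mathcal{W}_\chi)$, and checking it is an algebra map (not merely linear) is immediate since the symbol of a product is the product of symbols and $\mathbf{U}(\mathfrak{g}_e)\to\mathbf{U}(\mathfrak{p})$ is an algebra homomorphism.

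Since this is precisely the content of Theorem 3.8 of \cite{BGK}, I would in practice present the proof as: (i) set up the Kazhdan/loop filtration and the identification of $\mathcal{W}_\chi$ with twisted $\mathfrak{m}$-invariants in $Q_\chi$; (ii) verify $\mathfrak{g}_e\subset\mathfrak{p}$ and that $\mathbf{U}(\mathfrak{g}_e)$ lands in the associated graded; (iii) invoke the transversality $\mathfrak{g}=\mathfrak{g}_e\oplus[e,\mathfrak{g}]$ coming from the good grading to compute $\mathrm{gr}(\mathcal{W}_\chi)$ on the level of symbols as $S(\mathfrak{g}_e)$; (iv) conclude the two graded algebras coincide. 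The main obstacle is step (iii): making rigorous that the filtered space $\mathcal{W}_\chi$ has associated graded exactly $S(\mathfrak{g}_e)$ rather than something strictly larger or smaller requires a careful PBW-basis argument in $Q_\chi$ adapted to the grading, and it is here that both the injectivity of $\mathrm{ad}_e$ in negative degrees and the surjectivity in degrees $\geq-1$ are used essentially — dropping either would break the count. Everything else is bookkeeping with the Kazhdan grading, for which I would simply cite \cite{BGK} (and \cite{Wa} for the existence of the grading element $h_\Gamma$) rather than reproduce the full computation.
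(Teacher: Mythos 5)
First, a point of comparison: the paper does not actually prove this statement --- it is imported verbatim as Theorem 3.8 of \cite{BGK} --- so there is no in-paper argument to measure yours against, and your decision to ultimately defer to \cite{BGK} matches what the authors do. That said, your sketch contains one genuine confusion that would derail a self-contained proof. You identify the loop grading with the Kazhdan filtration (``one declares $x\in\mathfrak{g}(j)$ to have degree $j+2$''), and in step (iii) you compute the associated graded of $\mathcal{W}_\chi$ as the \emph{symmetric} algebra $S(\mathfrak{g}_e)$. These are two different filtrations with two different answers. Under the Kazhdan filtration (degree $j+2$ on $\mathfrak{g}(j)$, refined by PBW degree) one indeed gets $\mathrm{gr}^{K}\mathcal{W}_\chi\simeq S(\mathfrak{g}_e)$, which is commutative and cannot be isomorphic as a graded algebra to $\textbf{U}(\mathfrak{g}_e)$ unless $\mathfrak{g}_e$ is abelian. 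The theorem at hand concerns the \emph{loop} filtration, i.e.\ the restriction to $\mathcal{W}_\chi\subset\textbf{U}(\mathfrak{p})$ of the honest grading in which $x\in\mathfrak{g}(j)$ has degree $j$; only for that grading does $\mathrm{gr}\,\textbf{U}(\mathfrak{g})=\textbf{U}(\mathfrak{g})$ as you assert, and only for that filtration is the answer $\textbf{U}(\mathfrak{g}_e)$. As written, your steps (i)--(iv) are internally inconsistent on this point, and step (iv) would be comparing a commutative algebra with a noncommutative one.

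The repair is the actual route of \cite{BGK}: one first runs your Kazhdan-filtration argument (transversality $\mathfrak{g}=\mathfrak{g}_e\oplus[e,\mathfrak{g}]$ together with the injectivity/surjectivity of $\mathrm{ad}_e$ built into the good grading) to obtain the PBW theorem for $\mathcal{W}_\chi$, producing elements $\Theta(x)\in\mathcal{W}_\chi$ for $x$ in a homogeneous basis of $\mathfrak{g}_e$ whose ordered monomials form a basis; one then checks that, because the grading is even, each $\Theta(x)$ equals $x$ plus terms of strictly smaller loop degree in $\textbf{U}(\mathfrak{p})$, so that in the loop-associated graded the classes of the $\Theta(x)$ generate a copy of $\textbf{U}(\mathfrak{g}_e)$, and the PBW basis shows there is nothing else. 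So your two ingredients --- explicit lifting of symbols, plus a count against $\mathfrak{g}_e$ via the slice decomposition --- are the right ones, but they must be deployed across the two filtrations in that order rather than within a single conflated one.
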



\subsection{Tensor identities} All statements in this subsection can be found in \cite{BK1, BK3}. Though Brundan and Kleshchev dealt with case of type $A$ only, their proofs are still valid for types $B,C$ and $D$ when $e$ admits an even good grading.

Set the quotient space
$$Q_{\chi}:=\textbf{U}(\mathfrak{g})/I_{\chi}.$$
Denote by $1_{\chi}$ the coset of $1\in\textbf{U}(\mathfrak{g})$ in $Q_{\chi}$.
The vector space $Q_{\chi}$ possesses a $(\textbf{U}(\mathfrak{g}),\mathcal{W}_{\chi})$-bimodule structure, where the left action of $\textbf{U}(\mathfrak{g})$ is given by
$$u\circ u'1_{\chi}=(uu')1_{\chi}\quad (\forall u,u'\in\textbf{U}(\mathfrak{g}))$$
while the right action of $\mathcal{W}_{\chi}$ is given by
$$(u'1_{\chi})w=(u'w)1_{\chi}\quad (\forall w\in\mathcal{W}_{\chi},u'\in\textbf{U}(\mathfrak{g})).$$
We also have an isomorphism $\mathcal{W}_{\chi}\rightarrow \End_{\textbf{U}(\mathfrak{g})}(Q_{\chi})$. It has been known (c.f. \cite{BK3}) that $Q_{\chi}$ is a free $\mathcal{W}_{\chi}$-module and there exist $a_{1},\ldots,a_{h}\in \mathfrak{p}$ such that $\{ a_{1}^{i_{1}} \cdots a_{h}^{i_h}1_{\chi} \mid i_{1}, \ldots, i_{h} \geq 0 \}$ forms a basis of $Q_\chi$ as a free $\mathcal{W}_{\chi}$-module.

Denote by $\mathcal{C}(\chi)$ the
category consisting of all $\mathfrak{g}$-modules on which $a-\chi(a)$ acts locally nilpotently for all $a\in\mathfrak{m}$.
Skrybian's equivalence theorem says that the functor
\begin{eqnarray*}
Q_{\chi}\otimes_{\mathcal{W}_{\chi}}?: \mathcal{W}_{\chi}\mbox{-mod} &\rightarrow& \mathcal{C}(\chi),\\
M &\mapsto& Q_{\chi}\otimes _{\mathcal{W}_{\chi}}M
\end{eqnarray*} is an equivalence of categories.

Given $M\in\mathcal{C}(\chi)$, the subspace
$$\mbox{Wh}(M):=\{v\in M\mid xv=\chi(x)v, \forall x\in\mathfrak{m}\}$$
has a natural $\mathcal{W}_{\chi}$-module structure.  Thus we have a functor
\begin{eqnarray*}
\mbox{Wh}: \mathcal{C}(\chi)&\rightarrow& \mathcal{W}_{\chi}\mbox{-mod},\\
 M &\mapsto& \mbox{Wh}(M),
\end{eqnarray*}
which is the inverse of $Q_{\chi}\otimes_{\mathcal{W}_{\chi}}?$.

Let $W$ be an arbitrary finite dimensional $\mathfrak{g}$-module. Suppose that $W$ has a basis $\{w_{1}, \ldots, w_{r}\}$.
Define a functor
\begin{eqnarray*}
?\circledast W :\mathcal{W}_{\chi}\mbox{-mod}&\rightarrow&\mathcal{W}_{\chi}\mbox{-mod},\\
M &\mapsto& M\circledast W :=\mbox{Wh}((Q_{\chi}\otimes_{\mathcal{W}_{\chi}} M)\otimes W).
\end{eqnarray*}
Define $c_{i,j} \in \textbf{U}(\mathfrak{g})^{*}$ via the equation
$$uw_{j}=\sum_{i=1}^{r}c_{i,j}(u)w_{i} \quad \mbox{for any $u\in\textbf{U}(\mathfrak{g})$}.$$

Take a projection $\p:Q_{\chi} \twoheadrightarrow \mathcal{W}_{\chi}$ with $\p(1_{\chi})=1$.
Define a linear map of vector space by
$$\chi_{M,W}:M\circledast W \rightarrow M \otimes W,\quad (u1_{\chi} \otimes m) \otimes w\mapsto \p(u1_{\chi})m\otimes w.$$

\begin{theorem}[c.f Theorem 8.1 in \cite{BK1}] \label{tensor identy thm}
For any left $\mathcal{W}_{\chi}$-module $M$ and finite dimensional $\mathfrak{g}$-module $W$, the linear map $\chi_{M,W}$ is an isomorphism of vector
space and
$$\chi_{M,W}^{-1}(m\otimes w_j)= \sum^{r}_{i=1}(x_{i,j}\cdot 1_{\chi}\otimes m)\otimes w_i,$$
Where $(x_{i,j})_{1 \leq i,j \leq r}$ is a matrix with entries in $\textbf{U}(\mathfrak{p})$ determined uniquely by the properties
\begin{itemize}
\item[(1)] $\p(x_{i,j}1_{\chi})=\delta_{i,j}$; and

\item[(2)] $[a,x_{i,j}]+\sum_{s=1}^{r}c_{i,s}(a)x_{s,j} \in \textbf{U}(\mathfrak{g})I_{\chi}$ for any $a\in \mathfrak{m}$.
\end{itemize}
\end{theorem}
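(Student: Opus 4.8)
The plan is to follow Brundan and Kleshchev's proof of the type-$A$ tensor identity \cite[Theorem 8.1]{BK1} essentially verbatim, after checking that its only structural inputs are results already recorded in this subsection and hence available for types $B$, $C$, $D$ once $e$ admits an even good grading. Those inputs are: Skryabin's equivalence $Q_{\chi}\otimes_{\mathcal{W}_{\chi}}?$ with quasi-inverse $\mathrm{Wh}$; the freeness of $Q_{\chi}$ over $\mathcal{W}_{\chi}$ with basis $\{a_{1}^{i_{1}}\cdots a_{h}^{i_{h}}1_{\chi}\}$; and the fact that $\mathfrak{m}=\bigoplus_{i<0}\mathfrak{g}(i)$, being the nilradical of the parabolic $\mathfrak{p}$, acts by nilpotent operators on every finite-dimensional $\mathfrak{g}$-module, so that $\mathcal{C}(\chi)$ is closed under $?\otimes W$ and $M\circledast W$ is defined. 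None of these distinguishes $\mathfrak{gl}_{n}$ from the orthogonal and symplectic cases.

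First I would establish existence and uniqueness of the matrix $(x_{i,j})$. Read inside $Q_{\chi}$, condition (2) becomes $(a-\chi(a))(x_{i,j}1_{\chi})=-\sum_{s}c_{i,s}(a)\,x_{s,j}1_{\chi}$ for all $a\in\mathfrak{m}$ (using $x_{i,j}\in\textbf{U}(\mathfrak{p})$ and $a1_{\chi}=\chi(a)1_{\chi}$), which says the $j$-th column of $(x_{i,j}1_{\chi})$ is a Whittaker vector for the $\mathfrak{m}$-action on $Q_{\chi}\otimes W^{*}$, while condition (1) normalizes its $1_{\chi}$-component. Existence and uniqueness of the $x_{i,j}$ themselves follow from the standard filtered induction on the filtration of $\textbf{U}(\mathfrak{p})$ underlying the loop grading, carried out in \cite{BK1,BK3}: modulo lower filtration the defining relations become triangular and are uniquely solvable.

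Next I would treat $\chi_{M,W}$ and its candidate inverse. Well-definedness is immediate: any element of $Q_{\chi}\otimes_{\mathcal{W}_{\chi}}M$ expands uniquely as $\sum(a_{1}^{i_{1}}\cdots a_{h}^{i_{h}}1_{\chi})\otimes m_{(i)}$ and $\p(u1_{\chi})$ is the $1_{\chi}$-coefficient in such an expansion of $u1_{\chi}$, so $(u1_{\chi}\otimes m)\otimes w\mapsto\p(u1_{\chi})m\otimes w$ descends to $(Q_{\chi}\otimes_{\mathcal{W}_{\chi}}M)\otimes W$ and hence to the subspace $M\circledast W$. Setting $\psi(m\otimes w_{j}):=\sum_{i}(x_{i,j}1_{\chi}\otimes m)\otimes w_{i}$, a short computation using $a(x_{i,j}1_{\chi})=\chi(a)x_{i,j}1_{\chi}+[a,x_{i,j}]1_{\chi}$ and $aw_{i}=\sum_{k}c_{k,i}(a)w_{k}$ gives, for $a\in\mathfrak{m}$,
$$a\cdot\psi(m\otimes w_{j})=\chi(a)\psi(m\otimes w_{j})+\sum_{i}\Big(\big([a,x_{i,j}]+\sum_{s}c_{i,s}(a)x_{s,j}\big)1_{\chi}\otimes m\Big)\otimes w_{i},$$
and the last sum vanishes because condition (2) puts each bracketed element in $\textbf{U}(\mathfrak{g})I_{\chi}$, which annihilates $1_{\chi}$; thus $\psi$ takes values in $M\circledast W$. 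Condition (1) then yields $\chi_{M,W}(\psi(m\otimes w_{j}))=\sum_{i}\delta_{i,j}\,m\otimes w_{i}=m\otimes w_{j}$, i.e. $\chi_{M,W}\circ\psi=\mathrm{id}_{M\otimes W}$; in particular $\chi_{M,W}|_{M\circledast W}$ is surjective and $\psi$ is injective.

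The remaining and main point is that $\chi_{M,W}$ is injective on $M\circledast W$, equivalently that $\psi$ surjects onto it. I would prove this by the Kazhdan filtration: a filtration on $Q_{\chi}$ together with a compatible filtration on $M$ induces filtrations on $(Q_{\chi}\otimes_{\mathcal{W}_{\chi}}M)\otimes W$ and, since $\mathrm{Wh}$ is exact on this filtered category, on $M\circledast W$, with respect to which $\psi$ and $\chi_{M,W}$ are filtered maps; modulo lower filtration $x_{i,j}1_{\chi}=\delta_{i,j}1_{\chi}$, so $\mathrm{gr}\,\psi$ becomes the tautological identification of $\mathrm{gr}(M)\otimes W$ with the graded Whittaker subspace and is a bijection, whence $\psi$ and $\chi_{M,W}$ are mutually inverse isomorphisms and the displayed inverse formula is exactly the definition of $\psi$. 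The delicate point — compatibility of $\mathrm{Wh}$ with the filtration for a general, possibly infinite-dimensional, module $M$ — is part of the machinery imported from \cite{BK1,BK3} and requires no change outside type $A$; this, rather than any new phenomenon in types $B$, $C$, $D$, is what I expect to be the main obstacle to a fully self-contained account.
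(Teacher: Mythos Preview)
Your proposal is correct and matches the paper's approach exactly: the paper does not give an independent proof of this theorem but simply cites \cite[Theorem~8.1]{BK1}, prefacing the subsection with the remark that Brundan and Kleshchev's type-$A$ proofs remain valid verbatim for types $B$, $C$, $D$ once $e$ admits an even good grading. Your sketch supplies precisely the verification the paper leaves implicit, namely that the inputs (Skryabin's equivalence, freeness of $Q_{\chi}$, closure of $\mathcal{C}(\chi)$ under $?\otimes W$) are type-independent.
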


Any $\textbf{U}(\mathfrak{p})$-module $M$ can be viewed as a $\mathcal{W}_{\chi}$ module by restriction. For any $\mathfrak{g}$-module $W$,
define a linear map
$$\mu_{M,W}:M\circledast W \rightarrow M\otimes W,\quad (u1_{\chi} \otimes m) \otimes w\mapsto um\otimes w.$$
for all $u \in \mathfrak{p}$, $m \in M$ and $w\in W$.

\begin{corollary}[c.f Corollary 8.2 in \cite{BK1}]\label{tensor identy coro}
For any $\textbf{U}(\mathfrak{p})$-module $M$ and finite dimensional $\mathfrak{g}$-module $W$, $\mu_{M,W}$ is an isomorphism of $\mathcal{W}_{\chi}$-modules and
$$\mu_{M,V}^{-1}(m\otimes v_k)= \sum^{r}_{i,j=1}(x_{i,j}\cdot 1_{\chi}\otimes y_{j,k}m)\otimes v_i$$
where $(x_{i,j})_{1 \leq i,j \leq r}$ is the matrix defined in Theorem \ref{tensor identy thm} and $(y_{i,j})_{1 \leq i,j \leq r}$ is its inverse matrix.
\end{corollary}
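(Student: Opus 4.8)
\emph{Proof proposal.}

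The strategy is to reduce everything to Theorem \ref{tensor identy thm}. The point is that, for a $\textbf{U}(\mathfrak{p})$-module $M$, the map $\mu_{M,W}$ factors as $\mu_{M,W}=\Xi\circ\chi_{M,W}$, where $\Xi$ is the operator on $M\otimes W$ determined by $\Xi(m\otimes w_j)=\sum_{i}(x_{i,j}m)\otimes w_i$ using the matrix $(x_{i,j})_{1\leq i,j\leq r}$ from Theorem \ref{tensor identy thm}; since that matrix is invertible over $\textbf{U}(\mathfrak{p})$, the operator $\Xi$ is invertible on any $\textbf{U}(\mathfrak{p})$-module, and the corollary then follows by composing inverses.

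First I would note that, since $\mathcal{W}_{\chi}\subseteq\textbf{U}(\mathfrak{p})$, any $\textbf{U}(\mathfrak{p})$-module $M$ restricts to a $\mathcal{W}_{\chi}$-module, so Theorem \ref{tensor identy thm} applies and gives that $\chi_{M,W}$ is a linear isomorphism with $\chi_{M,W}^{-1}(m\otimes w_j)=\sum_{i=1}^{r}(x_{i,j}1_{\chi}\otimes m)\otimes w_i$. Using the PBW factorization $\textbf{U}(\mathfrak{g})=\textbf{U}(\mathfrak{p})\textbf{U}(\mathfrak{m})$ one identifies $Q_{\chi}$ with $\textbf{U}(\mathfrak{p})$ as a left $\textbf{U}(\mathfrak{p})$-module with $1_{\chi}\leftrightarrow 1$, the right $\mathcal{W}_{\chi}$-action becoming right multiplication inside $\textbf{U}(\mathfrak{p})$; this is precisely what makes $\mu_{M,W}$ well defined on $(Q_{\chi}\otimes_{\mathcal{W}_{\chi}}M)\otimes W$. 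Evaluating $\mu_{M,W}$ on the displayed expression for $\chi_{M,W}^{-1}(m\otimes w_j)$ and using that each $x_{i,j}\in\textbf{U}(\mathfrak{p})$ acts on $M$, one gets $(\mu_{M,W}\circ\chi_{M,W}^{-1})(m\otimes w_j)=\sum_{i}(x_{i,j}m)\otimes w_i=\Xi(m\otimes w_j)$, so $\mu_{M,W}=\Xi\circ\chi_{M,W}$.

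Then, writing $(y_{i,j})$ for the inverse of $(x_{i,j})$ over $\textbf{U}(\mathfrak{p})$, so that $\sum_j x_{i,j}y_{j,k}=\sum_j y_{i,j}x_{j,k}=\delta_{i,k}$, the operator $\Xi$ is invertible on $M\otimes W$ for every $\textbf{U}(\mathfrak{p})$-module $M$, with $\Xi^{-1}(m\otimes w_k)=\sum_{j}(y_{j,k}m)\otimes w_j$; hence $\mu_{M,W}=\Xi\circ\chi_{M,W}$ is a vector space isomorphism, and, taking $W=V$,
$$\mu_{M,V}^{-1}(m\otimes v_k)=\chi_{M,V}^{-1}\Bigl(\sum_{j}(y_{j,k}m)\otimes v_j\Bigr)=\sum_{i,j=1}^{r}(x_{i,j}1_{\chi}\otimes y_{j,k}m)\otimes v_i,$$
which is the asserted formula. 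Finally, $M\otimes W$ carries the $\mathcal{W}_{\chi}$-module structure obtained by restricting the diagonal $\textbf{U}(\mathfrak{p})$-action (with $W$ regarded as a $\mathfrak{p}$-module via $\mathfrak{p}\subset\mathfrak{g}$), and one checks that $\mu_{M,W}$ is $\mathcal{W}_{\chi}$-linear by the same computation as in type $A$ \cite{BK1, BK3}, which uses only the even good grading and so transfers verbatim. The main obstacle is really the input from Theorem \ref{tensor identy thm} together with the invertibility of $(x_{i,j})$ over $\textbf{U}(\mathfrak{p})$ (established in \cite{BK1}); granting these and the identification $\mu_{M,W}=\Xi\circ\chi_{M,W}$, the corollary is formal.
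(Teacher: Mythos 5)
Your proposal is correct and is essentially the argument the paper is relying on: the paper states this corollary without proof, citing Corollary 8.2 of \cite{BK1}, and the derivation there is exactly your factorization $\mu_{M,W}=\Xi\circ\chi_{M,W}$ with $\Xi$ inverted via the inverse matrix $(y_{i,j})$ of $(x_{i,j})$ over $\textbf{U}(\mathfrak{p})$. The only external inputs are the ones you flag — Theorem \ref{tensor identy thm} and the invertibility of $(x_{i,j})$ (unitriangularity with respect to the good grading, from \cite{BK1}) — which is consistent with the paper's treatment.
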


\begin{theorem}[c.f Lemma 3.2 in \cite{BK3}] \label{bk1 theorem} Let $M=\mathbb{C}1_{M}$ be a one dimensional $\textbf{U}(\mathfrak{p})$-module.
There exist $x_{i,j}\in\textbf{U}(\mathfrak{p})$ $(1\leq i,j\leq r)$ such that
\begin{itemize}
\item [(1)] $[a,x_{i,j}]+\sum_{s=1}^{r}c_{i,s}(a)x_{s,j} \in \textbf{U}(\mathfrak{g})I_{\chi}$ for any $a\in \mathfrak{m}$;
\item[(2)] $x_{i,j}$acts on $M$ as the scalar $\delta_{i,j}$.
\end{itemize}
For any such choice of $x_{i,j}\in\textbf{U}(\mathfrak{p})$ $(1\leq i,j\leq r)$, we have
$$\mu_{M,V}^{-1}(1_{M} \otimes v_{j})=\sum_{i=1}^{r}x_{i,j}1_{\chi}\otimes 1_{M}\otimes v_{i}.$$
\end{theorem}

\begin{proof}
Denote by ${\bf c}$ the linear function on $\mathfrak{p}$ determined by
$$a\cdot 1_M=\textbf{c}(a)1_M \quad (\forall a\in \mathfrak{p}).$$
Specify the projection $\p$ in Theorem \ref{tensor identy thm} by $\p(a_{1}^{i_{1}} \cdots a_{h}^{i_h}1_{\chi})={\textbf{c}}(a_{1}^{i_{1}}) \cdots {\textbf{c}}(a_{h}^{i_h})$. Then the statement follows from Theorem \ref{tensor identy thm} and Corollary \ref{tensor identy coro}.
\end{proof}

\subsection{Degenerate affine braid algebra}
For any $g\in\mathbb{B}$, denote by $g^*\in\mathfrak{g}$ its dual with respect to the Killing form.
Let $\kappa=\sum_{g\in \mathbb{B}} gg^*\in\textbf{U}(\mathfrak{g})$ be the Casimir element.

\begin{defn}\label{defbmw}
\textit{Degenerate affine braid algebra} $\mathcal{B}_{d}$ is defined by generators
$\tilde{s}_1,\ldots,\tilde{s}_{d-1}$, $\tilde{\kappa}_{0},\ldots,\tilde{\kappa}_{d}$ and $\tilde{\gamma}_{i,j}$ $(0\leq i\neq j\leq d)$
with some relations (refer to Theorem 1.1 in \cite{ZA} since it occupies too much space and will not be used in this paper).
\underline{}\end{defn}

Let $V$ be the natural $\mathfrak{g}$-module with a standard basis $\{v_i|i\in I\}$, and $M$ be any $\mathfrak{g}$-module. There is an action $\tilde{\Phi}: \mathcal{B}_{d}\rightarrow \mbox{End}(M\otimes V^{\otimes d})$ as follows.
\begin{equation}\label{actionofBd}
\left\{
\begin{array}{l}
\tilde{\Phi}(\tilde{s}_i)=1^{\otimes i} \otimes P\otimes 1^{\otimes (d-1-i)},(i=1,\ldots,d);\\
\tilde{\Phi}(\tilde{\kappa}_i)=1^{\otimes i}\otimes\kappa\otimes1^{\otimes (d-i)},(i=0,\ldots,d);\\
\tilde{\Phi}(\tilde{\gamma}_{i,j})=\sum_{g\in\mathbb{B}}1^{\otimes i}\otimes g\otimes 1^{\otimes (j-i-1)}\otimes g^*\otimes 1^{\otimes (d-j)},(0\leq i<j\leq d),
\end{array}
\right.
\end{equation} where $P$ is the linear operator such that $P(u\otimes v)=v\otimes u$.
This action of $\mathcal{B}_{d}$ on $M\otimes V^{\otimes d}$ commutes with the action of $\textbf{U}(\mathfrak{g})$ (c.f. Theorem 1.2 \cite{ZA}).


\subsection{Action of $\mathcal{B}_{d}$ on $V^{\otimes d}$} \label{section action of B_d}

Let $\mathbb{C}_{e}$ be the trivial $\textbf{U}(\mathfrak{p})$-module, which can be viewed as a $\mathcal{W}_{\chi}$-module by restriction. Hence $Q_{\chi}\otimes_{\mathcal{W}_{\chi}}\mathbb{C}_{e}$
is a $\mathfrak{g}$-module due to Skrybian's equivalence theorem.
Then there is a $\mathcal{B}_{d}$ action on $(Q_{\chi}\otimes_{\mathcal{W}_{\chi}}\mathbb{C}_{e})\otimes V^{\otimes d}$ via $\tilde{\Phi}$.
The subspace $\mbox{Wh}((Q_{\chi}\otimes_{\mathcal{W}_{\chi}}\mathbb{C}_{e})\otimes V^{\otimes d})$ is
invariant under $\tilde{\Phi}(\mathcal{B}_{d})$ since the action of $a-\chi(a)$ $(\forall a\in\mathfrak{m})$ commutes with $\tilde{\Phi}(\mathcal{B}_{d})$. Thus we have an action of
$\mathcal{B}_{d}$ on
$\mbox{Wh}((Q_{\chi}\otimes_{\mathcal{W}_{\chi}}\mathbb{C}_{e})\otimes V^{\otimes d})=\mathbb{C}_{e}\circledast V^{\otimes d},$
which commutes with the action of $\mathcal{W}_{\chi}$.

Thanks to the following isomorphisms of $\mathcal{W}_{\chi}$-modules
$$\begin{array}{rcccl}
V^{\otimes d}&\simeq&\mathbb{C}_{e}\otimes V^{\otimes d}&\simeq&\mathbb{C}_{e}\circledast V^{\otimes d}\\
w&\mapsto& 1\otimes w &\mapsto& \mu_{\mathbb{C}_{e},V^{\otimes d}}^{-1}(1\otimes w),
\end{array}$$
we obtain a $\mathcal{B}_{d}$ action (denoted by $\Phi$) which commutes with the action of $\mathcal{W}_{\chi}$.

The following lemma can be obtained by a straightforward calculation.
\begin{lemma}\label{sandgamma}
We have
$$\Phi(\tilde{s}_i)=1^{\otimes (i-1)} \otimes P\otimes 1^{\otimes (d-1-i)}, \quad(1\leq i\leq d)$$
and
$$\Phi(\tilde{\gamma}_{i,j})=\sum_{g\in\mathbb{B}}1^{\otimes (i-1)}\otimes g\otimes 1^{\otimes (j-i-1)}\otimes g^*\otimes 1^{\otimes (d-j)}=-\gamma_{i,j}+s_{i,j}, \quad(0<i<j\leq d),$$
where $s_{i,j}$ is the endomorphism of $V^{\otimes d}$ permuting the $i$-th and $j$-th tensor positions.
\end{lemma}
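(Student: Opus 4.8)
The goal is to identify the operators $\Phi(\tilde s_i)$ and $\Phi(\tilde\gamma_{i,j})$ on $V^{\otimes d}$, where $\Phi$ is the $\mathcal B_d$-action transported from $\mathbb C_e\circledast V^{\otimes d}$ to $V^{\otimes d}$ through the chain of $\mathcal W_\chi$-isomorphisms $V^{\otimes d}\simeq\mathbb C_e\otimes V^{\otimes d}\simeq\mathbb C_e\circledast V^{\otimes d}$, the last arrow being $\mu_{\mathbb C_e,V^{\otimes d}}^{-1}$ from Corollary \ref{tensor identy coro} (equivalently, the iterated version built from Theorem \ref{bk1 theorem}). The first thing I would do is set up the explicit identification: by Theorem \ref{bk1 theorem} applied to $M=\mathbb C_e$, the isomorphism $\mu_{\mathbb C_e,V^{\otimes d}}^{-1}$ sends $1\otimes v_{k_1}\otimes\cdots\otimes v_{k_d}$ to a sum $\sum (x_{i_1,k_1}\cdots x_{i_d,k_d}1_\chi\otimes 1)\otimes v_{i_1}\otimes\cdots\otimes v_{i_d}$ over indices $i_1,\dots,i_d$, with the $x_{i,j}\in\textbf U(\mathfrak p)$ the distinguished elements satisfying $\mathrm p(x_{i,j}1_\chi)=\delta_{i,j}$ and the intertwining relation $[a,x_{i,j}]+\sum_s c_{i,s}(a)x_{s,j}\in\textbf U(\mathfrak g)I_\chi$. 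The inverse map $\mu_{\mathbb C_e,V^{\otimes d}}$ simply applies $\mathrm p$ (or, on $\textbf U(\mathfrak p)$-generators, the module action) in the first slot.

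\textbf{Computing $\Phi(\tilde s_i)$.} The operator $\tilde\Phi(\tilde s_i)$ on $(Q_\chi\otimes_{\mathcal W_\chi}\mathbb C_e)\otimes V^{\otimes d}$ is $1^{\otimes i}\otimes P\otimes 1^{\otimes(d-1-i)}$, i.e. it swaps the two tensor factors $V$ in positions $i$ and $i+1$ of $V^{\otimes d}$ and does nothing to $Q_\chi$ or the remaining factors. Since the permutation $P$ of $v_{k_i}\otimes v_{k_{i+1}}$ just exchanges the roles of $k_i$ and $k_{i+1}$ (and correspondingly of $i_i$ and $i_{i+1}$) symmetrically in the defining sum for $\mu^{-1}$, conjugating $1^{\otimes i}\otimes P\otimes 1$ by $\mu_{\mathbb C_e,V^{\otimes d}}^{\pm1}$ gives back the plain permutation of the $i$-th and $(i+1)$-th positions of $V^{\otimes d}$. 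So $\Phi(\tilde s_i)=1^{\otimes(i-1)}\otimes P\otimes 1^{\otimes(d-1-i)}$; this is a short, essentially formal verification.

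\textbf{Computing $\Phi(\tilde\gamma_{i,j})$.} This is the substantive part. Here $\tilde\Phi(\tilde\gamma_{i,j})=\sum_{g\in\mathbb B}1^{\otimes i}\otimes g\otimes 1^{\otimes(j-i-1)}\otimes g^*\otimes 1^{\otimes(d-j)}$ acts on $(Q_\chi\otimes_{\mathcal W_\chi}\mathbb C_e)\otimes V^{\otimes d}$, so the left tensor leg is $Q_\chi$ in position $0$ and the legs labelled $i,j$ in the formula correspond to $V$-factors. Transporting through $\mu^{-1}$ requires pushing the $\mathfrak g$-action on the $V$-legs past the $x_{i,j}$'s; this is exactly what the intertwining relation in Theorem \ref{bk1 theorem}(1) together with the expression for $c_{i,s}$ in terms of the $\mathfrak g$-action on $V$ is designed to control, and it is the analogue (valid verbatim for types $B,C,D$ under an even good grading, as the subsection preamble notes) of Brundan--Kleshchev's computation in the type $A$ case. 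Carrying it out, the Casimir-type sum $\sum_g g\otimes g^*$ acting on the two $V$-slots produces two contributions: the genuine ``contraction'' term, which is $-\gamma_{i,j}$ (the sign coming from the relation between the Killing-form-dual basis $g\mapsto g^*$ and the bilinear form $\langle,\rangle$ defining $\gamma_{i,j}$, i.e. $\sum_g(g)_{ab}(g^*)_{cd}$ pairs up to $-\langle,\rangle\langle,\rangle$ up to the normalisation), and the ``transposition'' term, which is exactly $s_{i,j}$, the swap of slots $i$ and $j$. Hence $\Phi(\tilde\gamma_{i,j})=-\gamma_{i,j}+s_{i,j}$.

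\textbf{Main obstacle.} The only real work — and the place I expect bookkeeping to be delicate — is the $\tilde\gamma_{i,j}$ computation: tracking how the operators $g\otimes g^*$ on the $V$-factors interact with the distinguished elements $x_{i,j}\in\textbf U(\mathfrak p)$ under $\mu_{\mathbb C_e,V^{\otimes d}}^{-1}$, and in particular getting the sign in $-\gamma_{i,j}$ right from the Killing-form normalisation and the identity $\sum_{g\in\mathbb B} g\otimes g^* = (\text{contraction}) + (\text{flip})$ on $V\otimes V$. I would first verify this identity on $V\otimes V$ alone (a finite linear-algebra check using the basis $\mathbb B$ and the matrix units $F_{i,j}$), then feed it into the general transport argument exactly as in \cite{BK3}, noting that the even-good-grading hypothesis makes every step of the type $A$ argument go through unchanged. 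Since the statement says ``obtained by a straightforward calculation'', the intended proof is precisely this: cite the type $A$ computation of \cite{BK1,BK3}, observe its validity in the present generality, and record the two resulting formulas.
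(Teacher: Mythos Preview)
Your approach reaches the right conclusion but you are overcomplicating the transport step, and in doing so you blur the distinction between this lemma and the genuinely nontrivial Lemma~\ref{gr key le}. What makes the present lemma ``straightforward'' is that for $1\le i\le d-1$ and for $0<i<j\le d$ the operators $\tilde\Phi(\tilde s_i)$ and $\tilde\Phi(\tilde\gamma_{i,j})$ on $(Q_\chi\otimes_{\mathcal W_\chi}\mathbb C_e)\otimes V^{\otimes d}$ act as the identity on the factor $Q_\chi\otimes_{\mathcal W_\chi}\mathbb C_e$: they are of the form $\mathrm{id}\otimes T'$ with $T'\in\End(V^{\otimes d})$. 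Since $\mu_{\mathbb C_e,V^{\otimes d}}$ is defined by $(u1_\chi\otimes 1)\otimes w\mapsto (u\cdot 1)\otimes w$, any operator of this shape commutes with $\mu$ on the nose, and one reads off $\Phi=T'$ immediately. There is no need to invoke the explicit elements $x_{i,j}$ or the intertwining relation of Theorem~\ref{bk1 theorem}; those tools are exactly what is required for $\tilde\gamma_{0,k}$, where the $Q_\chi$ leg is genuinely hit, and that is the content of Lemma~\ref{gr key le}, not of this one.

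Once the transport is seen to be trivial, the only computation left is the one you correctly isolate at the end: the identity
\[
\sum_{g\in\mathbb B} g\otimes g^{*}\;=\;s-\gamma
\]
as operators on $V\otimes V$, checked directly from the matrix units $F_{i,j}$. That check, together with the trivial transport, is the whole of the paper's ``straightforward calculation''. Your plan would still succeed, but the passage about ``pushing the $\mathfrak g$-action on the $V$-legs past the $x_{i,j}$'s'' describes work that is simply not present here.
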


Write $v_\textbf{i}:=v_{i_1}\otimes v_{i_2}\otimes \cdots\otimes v_{i_d}$ for any $\textbf{i}=(i_1,i_2,\ldots,i_d)\in I^d$.
\begin{lemma} \label{gr key le}
For any $ 1\leq k\leq d$ and $\textbf{\em i}\in I^d$, we have
$$\Phi(\tilde{\gamma}_{0,k})\cdot v_{\bf i}=e^{(k)}\cdot v_{\bf i}+\mbox{lower terms associated to the grading \eqref{gradingofvd}}.$$
\end{lemma}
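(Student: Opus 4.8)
The plan is to unwind the definition of $\Phi$ so that $\Phi(\tilde{\gamma}_{0,k})\cdot v_{\mathbf i}$ becomes an explicit sum, and then read off its leading term for the grading \eqref{gradingofvd}. By construction $\Phi(\tilde{\gamma}_{0,k})=\mu\circ\tilde{\Phi}(\tilde{\gamma}_{0,k})\circ\mu^{-1}$, where $\mu:=\mu_{\mathbb{C}_e,V^{\otimes d}}$ and we use the identification $V^{\otimes d}\simeq\mathbb{C}_e\otimes V^{\otimes d}$; here $\tilde{\Phi}(\tilde{\gamma}_{0,k})=\sum_{g\in\mathbb{B}}g^{(0)}\otimes\big(1^{\otimes(k-1)}\otimes g^{*}\otimes 1^{\otimes(d-k)}\big)$, with $g^{(0)}$ left multiplication by $g$ on the factor $Q_\chi\otimes_{\mathcal{W}_\chi}\mathbb{C}_e$. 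Extend the notation of \eqref{def:ei} by writing $X^{(k)}=1^{\otimes(k-1)}\otimes X\otimes 1^{\otimes(d-k)}$ for $X\in\End(V)$.

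I will feed in two facts. First, Theorem \ref{bk1 theorem} applied with $W=V^{\otimes d}$ and the trivial module $M=\mathbb{C}_e$ produces elements $x_{\mathbf j,\mathbf i}\in\textbf{U}(\mathfrak{p})$ with $\mu^{-1}(1\otimes v_{\mathbf i})=\sum_{\mathbf j\in I^d}x_{\mathbf j,\mathbf i}1_\chi\otimes 1\otimes v_{\mathbf j}$, satisfying $\p(x_{\mathbf j,\mathbf i}1_\chi)=\delta_{\mathbf j,\mathbf i}$ and $[a,x_{\mathbf j,\mathbf i}]+\sum_{\mathbf s}c_{\mathbf j,\mathbf s}(a)x_{\mathbf s,\mathbf i}\in\textbf{U}(\mathfrak{g})I_\chi$ for all $a\in\mathfrak{m}$; moreover, running the construction of \cite{BK1,BK3}, each $x_{\mathbf j,\mathbf i}$ may be taken loop-homogeneous of degree $\mathrm{gr}(v_{\mathbf i})-\mathrm{gr}(v_{\mathbf j})$, so $x_{\mathbf j,\mathbf i}=0$ unless $\mathrm{gr}(v_{\mathbf i})\geq\mathrm{gr}(v_{\mathbf j})$ (since $\textbf{U}(\mathfrak{p})$ sits in nonnegative loop degrees). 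Second, since $\{\mu^{-1}(1\otimes v_{\mathbf i})\}_{\mathbf i}$ is a basis of $\mathbb{C}_e\circledast V^{\otimes d}$ and $\p(x_{\mathbf j,\mathbf i}1_\chi)=\delta_{\mathbf j,\mathbf i}$, the map $\mu$ is simply $\eta\otimes 1\otimes w\mapsto\varpi(\eta)\,w$, where $\varpi:=\varepsilon\circ\p:Q_\chi\to\mathbb{C}$ and $\varepsilon$ is the counit of $\mathcal{W}_\chi$. Combining these I obtain
\[
\Phi(\tilde{\gamma}_{0,k})\cdot v_{\mathbf i}=\sum_{\mathbf j\in I^d}\sum_{g\in\mathbb{B}}\varpi\big((g\,x_{\mathbf j,\mathbf i})1_\chi\big)\,(g^{*})^{(k)}\cdot v_{\mathbf j}.
\]

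Next I split $\mathbb{B}=(\mathbb{B}\cap\mathfrak{m})\sqcup(\mathbb{B}\cap\mathfrak{p})$, legitimate because $\mathbb{B}$ is a homogeneous basis ($\mathrm{gr}(F_{i,j})=\mathrm{col}(i)-\mathrm{col}(j)$). For $g\in\mathbb{B}\cap\mathfrak{m}$, commuting $g$ past $x_{\mathbf j,\mathbf i}$ and using $g1_\chi=\chi(g)1_\chi$ together with $[g,x_{\mathbf j,\mathbf i}]1_\chi=-\sum_{\mathbf s}c_{\mathbf j,\mathbf s}(g)x_{\mathbf s,\mathbf i}1_\chi$ in $Q_\chi$ gives $\varpi((g\,x_{\mathbf j,\mathbf i})1_\chi)=\chi(g)\delta_{\mathbf j,\mathbf i}-c_{\mathbf j,\mathbf i}(g)$. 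The $\delta_{\mathbf j,\mathbf i}$ part contributes $\big(\sum_{g\in\mathbb{B}\cap\mathfrak{m}}\chi(g)g^{*}\big)^{(k)}\cdot v_{\mathbf i}$, and since $\chi(g)=\mathrm{Trace}(\mathrm{ad}_e\,\mathrm{ad}_g)=\langle e,g\rangle$ (Killing form) and $e=\sum_{g\in\mathbb{B}}\langle e,g\rangle g^{*}$ with $\langle e,\mathfrak{g}(j)\rangle=0$ for $j\neq-2$, this equals exactly $e^{(k)}\cdot v_{\mathbf i}$, which lies in loop degree $\mathrm{gr}(v_{\mathbf i})+2$. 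All other summands have strictly smaller loop degree: if $c_{\mathbf j,\mathbf i}(g)\neq0$ then $\mathrm{gr}(v_{\mathbf j})=\mathrm{gr}(v_{\mathbf i})+\mathrm{gr}(g)$, so $(g^{*})^{(k)}v_{\mathbf j}$ lies in degree $\mathrm{gr}(v_{\mathbf i})$; and for $g\in\mathbb{B}\cap\mathfrak{p}$, say $g\in\mathfrak{g}(j_0)$ with $j_0\geq 0$, the corresponding term lies in degree $\mathrm{gr}(v_{\mathbf j})-j_0\leq\mathrm{gr}(v_{\mathbf j})\leq\mathrm{gr}(v_{\mathbf i})$ by the vanishing property of $x_{\mathbf j,\mathbf i}$. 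This is precisely the asserted identity.

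The step I expect to cost the most effort is the grading bookkeeping behind the second ingredient --- namely that $\mu^{-1}$, equivalently the family $x_{\mathbf j,\mathbf i}$, can be chosen loop-homogeneous of degree $\mathrm{gr}(v_{\mathbf i})-\mathrm{gr}(v_{\mathbf j})$. The loop grading does not make $I_\chi$, and hence $Q_\chi$ and $\mathcal{W}_\chi$, graded (only filtered; a degree shift as in the Kazhdan grading is needed to homogenize $\chi$), so one must rerun the recursion defining the $x_{\mathbf j,\mathbf i}$ in \cite{BK1,BK3} and check that homogeneity is inherited at each step from the loop-homogeneous matrix coefficients $c_{\mathbf j,\mathbf s}(a)$; the excerpt already records that these arguments survive for types $B,C,D$ under an even good grading. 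Granting this, the remaining pieces --- the commutation computation for $g\in\mathfrak{m}$, the Killing form identity $e=\sum_g\chi(g)g^{*}$, and the final degree count --- are routine.
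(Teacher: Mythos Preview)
Your overall approach---unwinding $\Phi$ through $\mu^{-1}$ via Theorem~\ref{bk1 theorem}, splitting the sum over $\mathbb{B}$ into $\mathfrak{m}$- and $\mathfrak{p}$-parts, and extracting $e^{(k)}v_{\mathbf i}$ from the $\mathfrak{m}$-part via the commutator relation and the identity $e=\sum_{g\in\mathbb{B}}\chi(g)g^*$---is exactly the paper's strategy, and your treatment of the $\mathfrak{m}$-terms matches the paper essentially line for line.

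The one genuine difference is your handling of the $\mathfrak{p}$-part, and here you are working harder than necessary. You argue that the $g\in\mathbb{B}\cap\mathfrak{p}$ terms are merely of lower degree, and to justify this you invoke loop-homogeneity of the $x_{\mathbf j,\mathbf i}$---which you correctly identify as delicate, since $I_\chi$ is not loop-homogeneous. The paper bypasses this entirely: for $g\in\mathfrak{p}$ one has $g\,x_{\mathbf j,\mathbf i}\in\textbf{U}(\mathfrak{p})$, and applying $\mu$ gives $(g\,x_{\mathbf j,\mathbf i})\cdot 1_{\mathbb{C}_e}=g\cdot(x_{\mathbf j,\mathbf i}\cdot 1_{\mathbb{C}_e})=g\cdot\delta_{\mathbf j,\mathbf i}=0$, since $g\in\mathfrak{p}$ kills the trivial module. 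So these terms simply \emph{vanish}; there is no degree bookkeeping to do and no homogeneity of $x_{\mathbf j,\mathbf i}$ to establish. In your own notation, $\varpi((g\,x_{\mathbf j,\mathbf i})1_\chi)=0$ for $g\in\mathfrak{p}$ for the same reason, so the ``hardest step'' you flagged evaporates once you notice this. (A minor stylistic point: $\mathcal{W}_\chi$ is not a Hopf algebra, so ``counit of $\mathcal{W}_\chi$'' is better phrased as the character afforded by the one-dimensional module $\mathbb{C}_e$.)
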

\begin{proof} Recall $e^{(k)}$ in \eqref{def:ei}. The notation $F_{q,p}^{(k)}$ used in this proof is defined similarly. Write $\mu:=\mu_{\mathbb{C}_{e},V^{\otimes d}}$ for short.
We have
$$\Phi(\tilde{\gamma}_{0,k})\cdot v_{\textbf{i}}=\mu(\tilde{\Phi}(\tilde{\gamma}_{0,k}) \cdot \sum_{\textbf{j} \in I^d}(x_{\textbf{j,i}}1_{\chi}\otimes 1)\otimes v_{\textbf{j}}) =\sum_{F_{p,q}\in \mathbb{B},\textbf{j} \in I^d }\mu((F_{p,q}x_{\textbf{j,i}}1_{\chi}\otimes 1)\otimes (F_{p,q}^*)^{(k)} v_{\textbf{j}} )$$
where $x_{\textbf{j,i}}$ $(\forall \textbf{i},\textbf{j}\in I^d)$ are determined by theorem \ref{bk1 theorem}. The first equality comes from Theorem \ref{bk1 theorem} (3). The second one follows from the action of $\tilde{\gamma}_{0,k}$ constructed in Equation \eqref{actionofBd}.

If $\mbox{col}(q)\leq \mbox{col}(p)$, then by Proposition \ref{parabolic de prop } (2) we have $F_{p,q} \in \mathfrak{p}$. By Theorem \ref{bk1 theorem} (2)
we have $$\mu((F_{p,q}x_{\textbf{j,i}}1_{\chi}\otimes 1)\otimes(F_{p,q}^*)^{(k)} v_{\textbf{j}} )=F_{p,q}x_{\textbf{j,i}}\cdot 1 \otimes (F_{p,q}^*)^{(k)} v_{\textbf{j}}=0.$$

If $\mbox{col}(q) > \mbox{col}(p) $ then  by Proposition \ref{parabolic de prop } (2), we have $F_{p,q} \in \mathfrak{m}$. Thus Theorem \ref{bk1 theorem} (1) implies that
$$\mu((F_{p,q}x_{\textbf{j,i}}1_{\chi}\otimes 1)\otimes (F_{p,q}^*)^{(k)} v_{\textbf{j}})= \mu((x_{\textbf{j,i}} F_{p,q}1_{\chi} \otimes 1\otimes (F_{p,q}^*)^{(k)} v_{\textbf{j}} + \sum_{\textbf{s} \in I^d}c_{\textbf{j},\textbf{s}}(F_{p,q})x_{\textbf{s},\textbf{i}} 1_{\chi} \otimes 1\otimes (F_{p,q}^*)^{(k)} v_{\textbf{j}}).$$
Since $F_{p,q}1_{\chi}=\chi (F_{p,q})$, we have
\begin{equation} \label{left}
 \mu((x_{\textbf{j,i}} F_{p,q}1_{\chi} \otimes 1 )\otimes (F_{p,q}^*)^{(k)} v_{\textbf{j}})=
\left\{\begin{array}{ll}
0, &  \mbox{if $\textbf{j}\neq \textbf{i}$};\\
\chi(F_{p,q})(F_{p,q}^*)^{(k)}v_{\textbf{i}}, & \mbox{if $\textbf{j}=\textbf{i}$}
\end{array}\right.
\end{equation}
and
\begin{equation} \label{rigt}
\mu(c_{\textbf{j},\textbf{s}}(F_{p,q})x_{\textbf{s},\textbf{i}} 1_{\chi} \otimes 1\otimes (F_{p,q}^*)^{(k)} v_{\textbf{j}} )
=\begin{cases}
0, \ \  \mbox{if}  \  \textbf{s}\neq \textbf{i}  \ \mbox{or} \ c_{\textbf{j},\textbf{s}}(F_{p,q})=0, \\
 c_{\textbf{j},\textbf{i}}(F_{p,q})(F_{p,q}^*)^{(k)} v_{\textbf{j}},\ \mbox{otherwise}.
\end{cases}
\end{equation}

A direct calculation shows that
$$ F_{p,q}^*=F_{q,p} \quad \mbox{if} \quad p=q; \quad \quad F_{p,q}^*=\frac{1}{2}F_{q,p} \quad \mbox{if} \quad p=-q.$$

Finally we obtain the term $e^{{(k)}}\cdot v_{\textbf{i}}$ by summing up Equation \eqref{left} over all $F_{p,q}\in\mathbb{B}$ with $\mbox{col}(p)>\mbox{col}(q)$ and $\textbf{j} \in I^d$, while the lower terms come from summing up Equation \eqref{rigt} over all $F_{p,q}\in\mathbb{B}$ with $\mbox{col}(p)>\mbox{col}(q)$ and $\textbf{j} \in I^d$.
\end{proof}

\subsection{ Higher level Schur-Weyl duality}\label{ Proof of Theorem higr shur w dual}
Following is a half of the higher level Schur-Weyl duality for types $B$, $C$ and $D$.

\begin{theorem}\label{higr shur w dual}
Let $G=\O(V)$ or $\SP(V)$, and $e$ be a nilpotent element in $\mathfrak{g}=\mbox{Lie}(G)$ with partition $[1^{r_{1}}2^{r_{2}}\cdots ]$ of $\dim(V)$ by Jordan blocks. Assume $e$ satisfies that
\begin{itemize}
\item[(1)] the nilpotent orbit closure $\overline{G\cdot e}$ is a normal variety;
\item[(2)] if $G=\O(V)$, either $r_{s}$ is odd or $r_{s}> 2d$ for all for odd $s$; if $G=\SP(V)$, either $r_{s}$ is odd or $r_{s}> 2d$ for all for even $s$.
\item[(3)] $e$ admits an even good grading $\Gamma:\mathfrak{g}=\bigoplus_{i \in 2\mathbb{Z}}\mathfrak{g}(i)$.

\end{itemize}
Then
\begin{equation}
\mbox{\em End}_{\mathcal{W}_{\chi}}(V^{\otimes d})=\Phi(\mathcal{B}_{d}).
\end{equation}
\end{theorem}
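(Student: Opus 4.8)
The plan is to establish the two inclusions $\Phi(\mathcal{B}_d)\subseteq\mbox{End}_{\mathcal{W}_\chi}(V^{\otimes d})$ and $\mbox{End}_{\mathcal{W}_\chi}(V^{\otimes d})\subseteq\Phi(\mathcal{B}_d)$ separately. The first inclusion is essentially free: the construction in Section \ref{section action of B_d} produces the $\mathcal{B}_d$-action $\Phi$ precisely as a transport of the $\tilde\Phi$-action on $\mbox{Wh}((Q_\chi\otimes_{\mathcal{W}_\chi}\mathbb{C}_e)\otimes V^{\otimes d})$ through the $\mathcal{W}_\chi$-module isomorphism $\mu_{\mathbb{C}_e,V^{\otimes d}}$, and by Theorem 1.2 of \cite{ZA} the $\tilde\Phi$-action commutes with $\textbf{U}(\mathfrak{g})$, hence with $\mathcal{W}_\chi$; so $\Phi(\mathcal{B}_d)$ lands in $\mbox{End}_{\mathcal{W}_\chi}(V^{\otimes d})$.

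For the reverse inclusion, the key is to relate $\mbox{End}_{\mathcal{W}_\chi}(V^{\otimes d})$ to $\mbox{End}_{\textbf{U}(\mathfrak{g}_e)}(V^{\otimes d})$, which Theorem \ref{universal coro} identifies with $B_d[e]$ under hypotheses (1) and (2). Since $\mathcal{W}_\chi$ is a filtered subalgebra of $\textbf{U}(\mathfrak{g})$ whose associated graded is $\textbf{U}(\mathfrak{g}_e)$ (Theorem \ref{pr's loop grading lemma}), an endomorphism of $V^{\otimes d}$ commuting with $\mathcal{W}_\chi$ should, after passing to the associated graded with respect to the grading \eqref{gradingofvd} on $V^{\otimes d}$, commute with $\mbox{gr}(\mathcal{W}_\chi)=\textbf{U}(\mathfrak{g}_e)$. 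Concretely I would argue: $\mbox{End}_{\mathcal{W}_\chi}(V^{\otimes d})$ is a filtered space, $\mbox{gr}\,\mbox{End}_{\mathcal{W}_\chi}(V^{\otimes d})\subseteq\mbox{End}_{\textbf{U}(\mathfrak{g}_e)}(V^{\otimes d})=B_d[e]$, so $\dim\mbox{End}_{\mathcal{W}_\chi}(V^{\otimes d})\leq\dim B_d[e]$. Then I would show $\dim\Phi(\mathcal{B}_d)\geq\dim B_d[e]$ by exhibiting enough elements in the image of $\Phi$: Lemma \ref{sandgamma} gives $\Phi(\tilde s_i)$ the position-swap operators and $\Phi(\tilde\gamma_{i,j})=-\gamma_{i,j}+s_{i,j}$ for $0<i<j$, so $\Phi(\mathcal{B}_d)$ contains all of $B_d$ (generated by $\sigma(\mathfrak{S}_d)$ and the $\gamma_{i,j}$); and Lemma \ref{gr key le} shows $\Phi(\tilde\gamma_{0,k})=e^{(k)}+(\text{lower order})$ with respect to \eqref{gradingofvd}. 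Hence $\mbox{gr}\,\Phi(\mathcal{B}_d)$ contains all $\gamma_{i,j}$, all permutations, and all $e^{(k)}$, so $\mbox{gr}\,\Phi(\mathcal{B}_d)\supseteq B_d[e]$, giving $\dim\Phi(\mathcal{B}_d)\geq\dim B_d[e]\geq\dim\mbox{End}_{\mathcal{W}_\chi}(V^{\otimes d})\geq\dim\Phi(\mathcal{B}_d)$, forcing equalities throughout.

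In more detail: I would filter $V^{\otimes d}$ by $F_{\leq m}=\bigoplus_{\mbox{gr}(v_{\mathbf i})\geq -m}\mathbb{C}v_{\mathbf i}$ (or the appropriate convention so that $\mathcal{W}_\chi\subseteq\textbf{U}(\mathfrak{p})$ acts compatibly, as asserted after the definition of $\mathcal{W}_\chi$), inducing a filtration on $\mbox{End}(V^{\otimes d})$ for which the $\mathcal{W}_\chi$-module structure on $V^{\otimes d}$ is a filtered module; taking associated graded turns the $\mathcal{W}_\chi$-action into a $\mbox{gr}(\mathcal{W}_\chi)=\textbf{U}(\mathfrak{g}_e)$-action, and this graded $\textbf{U}(\mathfrak{g}_e)$-module is $V^{\otimes d}$ with its ambient $\mathfrak{g}$-structure restricted to $\mathfrak{g}_e\subseteq\mathfrak{p}$ (since $e\in\mathfrak{g}(2)$ and the Whittaker/Kazhdan transport matches the natural action in top degree, which is what Lemma \ref{gr key le} is checking). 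Then $\mbox{gr}\,\mbox{End}_{\mathcal{W}_\chi}(V^{\otimes d})\subseteq\mbox{End}_{\mbox{gr}\,\mathcal{W}_\chi}(\mbox{gr}\,V^{\otimes d})=\mbox{End}_{\textbf{U}(\mathfrak{g}_e)}(V^{\otimes d})$, and the dimension count closes as above.

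The main obstacle I anticipate is the compatibility bookkeeping between the two gradings/filtrations involved: the loop grading on $\textbf{U}(\mathfrak{g})$ that makes $\mathcal{W}_\chi$ a filtered subalgebra with $\mbox{gr}\,\mathcal{W}_\chi\cong\textbf{U}(\mathfrak{g}_e)$, and the grading \eqref{gradingofvd} on $V^{\otimes d}$. One must verify that the $\mathcal{W}_\chi$-module structure on $V^{\otimes d}$ is filtered for the filtration induced by \eqref{gradingofvd} and that its associated graded module is honestly $V^{\otimes d}$ with the restricted $\textbf{U}(\mathfrak{g}_e)$-action (not some twisted version) — this is exactly the content that Lemma \ref{gr key le} is designed to supply for the $\tilde\gamma_{0,k}$ generators, and the analogous statement for the $\mathcal{W}_\chi$-action itself follows from Theorem \ref{bk1 theorem} and Corollary \ref{tensor identy coro} by the same leading-term computation. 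A secondary subtlety is ensuring the hypotheses (1)–(3) are all used correctly: (1) and (2) feed Theorem \ref{universal coro} to get $\mbox{End}_{\textbf{U}(\mathfrak{g}_e)}(V^{\otimes d})=B_d[e]$, while (3) guarantees the even good grading exists so that $\mathcal{W}_\chi$, the loop grading, and all of Section \ref{w-algebra} make sense. Once the filtered/graded dictionary is set up cleanly, the proof is the sandwich of dimension inequalities described above.
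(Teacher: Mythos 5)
Your proposal is correct and follows essentially the same route as the paper: pass to associated graded, use Theorem \ref{pr's loop grading lemma} and Theorem \ref{universal coro} (fed by hypotheses (1)--(2)) to get $\mbox{gr}\,\mbox{End}_{\mathcal{W}_\chi}(V^{\otimes d})\subseteq \mbox{End}_{\textbf{U}(\mathfrak{g}_e)}(V^{\otimes d})=B_d[e]$, use Lemmas \ref{sandgamma} and \ref{gr key le} to get $B_d[e]\subseteq\mbox{gr}\,\Phi(\mathcal{B}_d)$, and close by the sandwich of inclusions/dimensions. The paper states this at the level of graded subalgebras rather than as an explicit dimension count, but the argument is the same.
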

\begin{proof} Notice that the action of $\mathcal{W}_{\chi}$ on $V^{\otimes d}$ is compatible with the filtration of $\mathcal{W}_{\chi}$.
 Hence we have an action of  $\mbox{gr}(\mathcal{W}_{\chi})$ on $V^{\otimes d}$. The canonical isomorphism $\mbox{gr}(\mathcal{W}_{\chi})\simeq U(\mathfrak{g}_{e})$ given in Theorem \ref{pr's loop grading lemma} implies that the above action of $\mbox{gr}(\mathcal{W}_{\chi})$
 coincides with the action of $U(\mathfrak{g}_{e})$ on $V^{\otimes d}$.

As a subalgebra of the graded algebra $\End(V^{\otimes d})$, $\Phi(\mathcal{B}_{d})$ admits a natural filtrated algebra structure. And hence there is a natural embedding $\mbox{gr}(\Phi(\mathcal{B}_{d})) \hookrightarrow \End(V^{\otimes d})$. Without confusion, we also denote the image of this embedding by the same notation $\mbox{gr}(\Phi(\mathcal{B}_{d}))$.
Since $ \Phi(\mathcal{B}_{d}) \subseteq \mbox{End}_{\mathcal{W}_{\chi}}(V^{\otimes d})$, we can calculate that
$$\mbox{gr}(\Phi(\mathcal{B}_{d})) \subseteq \mbox{End}_{\mbox{gr}(\mathcal{W}_{\chi})}(V^{\otimes d})=\mbox{End}_{U(\mathfrak{g}_{e})}(V^{\otimes d})=B_{d}[e].$$

On the other hand,
Lemmas \ref{sandgamma} and \ref{gr key le} show that $B_{d}[e] \subseteq \mbox{gr}(\Phi(\mathcal{B}_{d}))$.

So $\mbox{gr}(\Phi(\mathcal{B}_{d}))= \mbox{End}_{\mbox{gr}(\mathcal{W}_{\chi})}(V^{\otimes d})\supseteq\mbox{gr}(\mbox{End}_{\mathcal{W}_{\chi}}(V^{\otimes d}))$, which together with the fact  $\mbox{End}_{\mathcal{W}_{\chi}}(V^{\otimes d}) \supseteq \Phi(\mathcal{B}_{d})$ implies $\mbox{End}_{\mathcal{W}_{\chi}}(V^{\otimes d})=\Phi(\mathcal{B}_{d})$.
\end{proof}

\section*{Acknowledgements} We thank Weiqiang Wang, Bin Shu, Linliang Song and Yang Zeng for their helpful discussions. The first author is supported by NSF of China (Grant No. 11271131). The second author is partially supported by the NSF of China (Grant No. 11271130) and ``East China Normal University Outstanding Doctoral Dissertation Cultivation Plan of Action'' (Grant No. PY2015039).

\end{document}